\documentclass[a4paper,12pt,reqno]{amsart}
\usepackage{amsfonts}
\usepackage{amsmath}
\usepackage{amssymb}
\usepackage[a4paper]{geometry}
\usepackage{mathrsfs}
\usepackage[english]{babel}
\usepackage{xcolor}
\usepackage{enumerate}
\usepackage[colorlinks]{hyperref}
\renewcommand\eqref[1]{(\ref{#1})} 

%%%%%%%%%%%%%%%%%%%%%%%%%

\setlength{\textwidth}{15.2cm}
\setlength{\textheight}{22.7cm}
\setlength{\topmargin}{0mm}
\setlength{\oddsidemargin}{3mm}
\setlength{\evensidemargin}{3mm}
\setlength{\footskip}{1cm}

%%%%%%%%%%%%%%%%%%%%%%%%%%%

\numberwithin{equation}{section}
\theoremstyle{plain}
\newtheorem{theorem}{Theorem}[section]
\newtheorem{proposition}[theorem]{Proposition}
\newtheorem{corollary}[theorem]{Corollary}

\theoremstyle{definition}

\newtheorem{example}[theorem]{Example}

%%%%%%%%%%%%%%%%%%%%%%%%%%%%%

\newcommand{\C}{{\mathbb C}}

\newcommand{\N}{{\mathbb N}}
\newcommand{\R}{{\mathbb R}}
\newcommand{\h}{{\mathbb H}}
\newcommand{\Rn}{{\R}^n}

\newcommand{\slp}{{\mathcal L}}
\newcommand{\Rep}{{\rm Re}}
\newcommand{\Imp}{{\rm Im}}
\newcommand{\HS}{{\mathtt{HS}}}

\begin{document}

\title{Time-dependent wave equations on graded groups}

\author[M. Ruzhansky]{Michael Ruzhansky}
\address{
  Michael Ruzhansky:
  \endgraf
  Department of Mathematics
  \endgraf
  Imperial College London
  \endgraf
  180 Queen's Gate, London SW7 2AZ
  \endgraf
  United Kingdom
  \endgraf
  {\it E-mail address} {\rm m.ruzhansky@imperial.ac.uk}
  }
\author[C. Taranto]{Chiara Taranto}
\address{
  Chiara Taranto:
  \endgraf
  Department of Mathematics
  \endgraf
  Imperial College London
  \endgraf
  180 Queen's Gate, London SW7 2AZ
  \endgraf
  United Kingdom
  \endgraf
  {\it E-mail address} {\rm c.taranto13@imperial.ac.uk}
  }

\thanks{The first author was supported in parts by the EPSRC
 grant EP/K039407/1 and by the Leverhulme Grant RPG-2014-02. No new data was collected or generated during the course of research.}

     \keywords{Sub-Laplacian, Rockland operator, Gevrey spaces, wave equation, Heisenberg group, graded groups}
     \subjclass[2010]{35L05, 35L30, 43A70, 42A80}

     \begin{abstract}
    In this paper we consider the wave equations for hypoelliptic homogeneous left-invariant operators on graded Lie groups with time-dependent H\"older (or more regular) non-negative propagation speeds. The examples are the time-dependent wave equation for the sub-Laplacian on the Heisenberg group or on general stratified Lie groups, or $p$-evolution equations for higher order operators on $\Rn$ or on groups, already in all these cases our results being new. We establish sharp well-posedness results in the spirit of the classical result by Colombini, de Giorgi and Spagnolo. In particular, we describe an interesting local loss of regularity phenomenon depending on the step of the group (for stratified groups) and on the order of the considered operator.
     \end{abstract}

\maketitle

\section{Introduction}

In this paper we are interested in the well-posedness of the following Cauchy problem: 
\begin{align}\label{CP}
\begin{cases}
\partial_t^2 u(t,x)+a(t)\mathcal R u(t,x)=0,\quad(t,x)\in[0,T]\times G,\\
u(0,x)=u_0(x),\quad x\in G,\\
\partial_t u(0,x)=u_1(x),\quad x\in G,
\end{cases}
\end{align}
for the time-dependent propagation speed $a=a(t)\geq 0$.

In the case of $G=\Rn$ and $\mathcal R=-\Delta$, the equation \eqref{CP} is the usual wave equation with the time-dependent propagation speed and its well-posedness results for H\"older regular functions $a$ have been obtained by Colombini, de Giorgi and Spagnolo in their seminal paper \cite{CDG1979}. Moreover, it has been shown by Colombini and Spagnolo in \cite{CS1982}  and by Colombini, Jannelli and Spagnolo in \cite{Colombini-Jannelli-Spagnolo:Annals-low-reg} that even in the case of $G=\R$ and $\mathcal R=-\frac{d^2}{dx^2}$ the Cauchy problem \eqref{CP} does not have to be well-posed in $\mathcal C^\infty$ if $a\in \mathcal C^\infty$ is not strictly positive or if it is in the H\"older class $a\in \mathcal C^\alpha$ for $0<\alpha<1$.

In this paper we obtain new results for the following situations:

\begin{itemize}
\item[(i)] $G=\mathbb H^n$ is the Heisenberg group and $\mathcal R$ is the positive Kohn-Laplacian on $G$.
\item[(ii)] $G$ is a stratified Lie group and $\mathcal R$ is a (positive) sub-Laplacian on $G$.
\item[(iii)] $G$ is a graded Lie group in the sense of Folland and Stein \cite{FS} and $\mathcal R$ is any positive Rockland operator on $G$, i.e. any positive left-invariant homogeneous hypoelliptic differential operator on $G$.
\end{itemize} 
In fact, our results are for the latter case (iii), the former two cases (i) and (ii) being its special cases. In particular, already in the cases of $G$ being the Euclidean space $\Rn$, the Heisenberg group $\mathbb H^n$, or any stratified Lie group, the case (iii) above allows one to consider $\mathcal R$ to be an operator of {\em any order}, as long as it is a positive left- (or right-) invariant homogeneous hypoelliptic differential operator. In the case of $\Rn$ these cases of so-called $p$-evolution equations have been studied in e.g. \cite{CHR08a,CHR08b, CC10}, however, for more restrictive conditions on $a(t)$ than those considered in this paper.

For $a(t)\equiv 1$ and $G$ being the Heisenberg group $\mathbb H^n$ with $\mathcal R$ being the positive sub-Laplacian, the wave equation \eqref{CP} was studied by M\"uller and Stein \cite{Muller-Stein:Lp-wave-Heis} and Nachman \cite{Nachman:wave-Heisenberg-CPDE-1982}.
Other noncommutative settings with $a(t)\equiv 1$ have been analysed as well, see e.g. Helgason \cite{Helgason:wave-eqns-hom-spaces-1984}.
For $G$ being a compact Lie group and $-\mathcal R$ any H\"ormander's sum of squares on $G$ the problem \eqref{CP} was studied in \cite{GR}, and so the results of the present paper provide a nilpotent counterpart of the results there.

Apart from an independent interest of the subelliptic setting of stratified or graded Lie groups, these settings are the model cases for many corresponding problems for general partial differential operators on manifolds in view of the celebrated lifting theorem of Rothschild and Stein \cite{Rothschild-Stein:AM-1976}.

From the point of view of the time-dependent coefficient $a(t)$, we aim at carrying out the comprehensive analysis, thus distinguishing between the following four cases:
\begin{enumerate}[\text{$\quad$ Case} 1:]
\item $a\in\mathcal {\rm Lip}([0,T])$, $a(t)\geq a_0>0$;
\item $a\in\mathcal C^\alpha([0,T])$, $0<\alpha<1$, $a(t)\geq a_0 >0$;
\item $a\in\mathcal C^l([0,T])$, $l \geq 2$, $a(t)\geq 0$;
\item $a\in\mathcal C^\alpha([0,T])$, with $0<\alpha<2$, $a(t)\geq0$.
\end{enumerate}
The first case is the simplest situation while in the forth case we have an irregular coefficient that is allowed to be zero at some points. The second and third situations are `intermediate' cases, in the sense that we have either the regularity or the strict positivity. 
We distinguish between these cases because the results and methods of proofs are rather different.

We note that if the operator $\mathcal R$ is not elliptic, the local approach to the Cauchy problem \eqref{CP} is problematic since the equation is only {\em weakly hyperbolic} already in Case 1 above. Consequently, since the equation \eqref{CP} in local coordinates is the space-dependent variable multiplicities problem, very little is known about its well-posedness. In this direction, only very special results for some second order operators are available, see e.g. Nishitani \cite{Nishitani:BSM-1983} or Melrose \cite{Melrose:wave-subelliptic-1986}.
Non-Lipschitz coefficients have been also much analysed, see e.g. Colombini and M\'etivier \cite{CM} or Colombini and Lerner \cite{CL}. In addition to already mentioned restrictions for the well-posedness, see also Colombini and M\'etivier \cite{CM-systems} for a recent overview from the point of view of systems.

In the case of $\Rn$ and $-\mathcal R$ being the Laplacian, the regularity of $a$ less than H\"older such as discontinuous or measure-valued $a$ have been considered in \cite{Garetto-Ruzhansky:ARMA}. However, such low regularity requires very different methods, and this problem for the general Cauchy problem \eqref{CP} will be considered elsewhere.

Wave equations with time dependent coefficients for general densely defined operators with discrete spectrum acting in Hilbert spaces have been considered in \cite{RT-ARMA}. However, that setting is different from the present one since the spectrum in our situation is continuous.

To formulate our results, let us briefly introduce some necessary notation following, for example, Folland and Stein \cite{FS}.
Let $G$ be a {\em graded Lie group}, i.e. a connected simply connected Lie group such that its
Lie algebra $\mathfrak g$ has  a vector space decomposition
\begin{equation}\label{EQ:graded}
\mathfrak g = \oplus_{j=1}^\infty V_j,
\end{equation} 
such that 
all but finitely many of the $V_j$'s are $\{0\}$ and 
$[V_i,V_j]\subset V_{i+j}$. A special case analysed in detail by Folland \cite{F75} is of {\em stratified Lie groups} when the first stratum $V_1$ generates $\mathfrak g$ as an algebra, see also Folland and Stein \cite{FS-CPAM}. A typical example of such Lie group is the Heisenberg group. In general, graded Lie groups are necessarily homogeneous and nilpotent. Moreover, any graded Lie group can be viewed as some $\Rn$ with a polynomial group law. We can also refer to \cite[Section 3.1]{FR2016} for a detailed discussion of graded Lie groups and their properties.

Let $\mathcal R$ be a positive Rockland operator on $G$, that is, a positive (in the operator sense) left-invariant differential operator which is homogeneous of degree $\nu>0$ and which satisfies the so-called Rockland condition. This means that for each representation $\pi\in\widehat G$, except for the trivial one, the operator $\pi(\mathcal R)$ is injective on the space of smooth vectors $\mathcal H^\infty_\pi$, i.e.
\begin{align}\label{Rockland}
\forall v\in\mathcal H^\infty_\pi \quad \pi(\mathcal R)v=0\implies v=0.
\end{align}
Alternative characterisations of such operators have been considered by Rockland \cite{Rockland} and Beals \cite{Beals-Rockland}, until the definitive result of Helffer and Nourrigat \cite{HN-79} saying that {\em Rockland operators are precisely the left-invariant homogeneous hypoelliptic differential operators on $G$.} The existence of Rockland operators on general nilpotent Lie groups characterises precisely the class of graded Lie groups \cite{Miller:80, tER:97}. An example of 
a positive Rockland operator is the positive sub-Laplacian on a stratified Lie group: if $G$ is a stratified Lie group and $\{X_1,\dots,X_k\}$ is a basis for the first stratum of its Lie algebra, then  the positive sub-Laplacian 
\[
\slp=-\sum_{j=1}^{k}X_j^2
\]
is a positive Rockland operator. Moreover, for any $m\in\mathbb N$, the operator
$$
\mathcal R=(-1)^{m}\sum_{j=1}^{k}X_j^{2m}
$$
is a positive Rockland operator on the stratified Lie group $G$.
More generally, for any graded Lie group $G\sim\Rn$, if $X_1,\ldots,X_n$ is the basis of its Lie algebra $\mathfrak g$ with dilation weights $\nu_1,\ldots,\nu_n$, i.e. with
\begin{equation}\label{EQ:dils}
D_r X_j=r^{\nu_j} X_j,\quad j=1,\ldots,n,\; r>0,
\end{equation} 
where $D_r$ are dilations on $\mathfrak g$, then
the operator
$$
\mathcal R=\sum_{j=1}^n (-1)^{\frac{\nu_0}{\nu_j}} a_j X_j^{2\frac{\nu_0}{\nu_j}},\quad a_j>0,
$$
is a Rockland operator of homogeneous degree $2\nu_0$, if $\nu_0$ is any common multiple of $\nu_1,\ldots,\nu_n$. We refer to \cite[Section 4.1.2]{FR2016} for other examples and a detailed discussion of Rockland operators and graded Lie groups. In the case of $\Rn$, all elliptic homogeneous differential operators with constant coefficients are Rockland operators.

To formulate our results we will need two scales of spaces, namely, Sobolev and Gevrey spaces, adapted to the setting of graded Lie groups. Thus, let $G$ be a graded Lie group and let $\mathcal R$ be a positive Rockland operator of homogeneous degree $\nu$.
For any real number $s\in\R$, the Sobolev space $H^s_\mathcal R(G)$ is the subspace of $\mathcal S'(G)$ obtained as the completion of the Schwartz space $\mathcal S(G)$ with respect to the Sobolev norm
\begin{equation}\label{EQ:dSob}
\|f\|_{H^s_\mathcal R(G)}:=\|(I+\mathcal R)^{\frac{s}{\nu}}f\|_{L^2(G)}.
\end{equation} 
For stratified Lie groups such spaces and their properties have been extensively analysed by Folland in \cite{F75} and on general graded Lie groups they have been investigated in \cite{FR:Sobolev,FR2016}. In particular, these spaces do not depend on a particular choice of the Rockland operartor $\mathcal R$ used in the definition \eqref{EQ:dSob}, see \cite[Theorem 4.4.20]{FR2016}).
These spaces perfectly suit Case 1 described above but already in the Euclidian case, with the elliptic Laplace operator instead of the hypoelliptic Rockland operator in the wave equation \eqref{CP}, if the coefficient $a(t)$ is not Lipschitz regular or may become zero, the Gevrey spaces appear naturally (see e.g. Bronshtein \cite{Bronshtein:TMMO-1980})
since we can not expect anymore the well-posedness in $\mathcal C^\infty(G)$ or $\mathcal D'(G)$. Indeed, Colombini and Spagnolo  exhibited a concrete example in \cite{CS1982} of a Cauchy problem for the time-dependent wave equation on $\R$ with smooth $a\geq 0$ which is not well-posed in  $\mathcal C^\infty(\R)$ or $\mathcal D'(\R)$.  

Thus, given $s\geq 1$, we define the Gevrey type space
\begin{align}\label{G}
\mathcal G^s_{\mathcal R}(G):=\{f\in\mathcal{C}^\infty(G)\,|\,\exists A>0\,:\,\|e^{A\mathcal R^{\frac{1}{2s}}}f\|_{L^2(G)}<\infty\}.
\end{align}
These spaces provide for a subelliptic version of the usual Gevrey spaces. For example,
for $G=\mathbb H^n$ being the Heisenberg group with the basis $X_1,\ldots,X_{2n}$ of the first stratum, it was shown in \cite{FRT20..} that $f\in \mathcal G^s_{\mathcal R}(\mathbb H^n)$
if and only if  there exist two constants $B,C>0$ such that for every $\alpha\in\N^{2n}_0$ the following inequality holds
\begin{equation}\label{gevrey}
\|\partial^\alpha f\|_{L^2(\mathbb H^n)}\leq C B^{|\alpha|}(\alpha!)^s,
\end{equation}
where $\partial^\alpha=Y_1\dots Y_{|\alpha|}$, with $Y_j\in\{X_1,\dots,X_{2n}\}$ for every $j=1,\dots,|\alpha|$ and $\sum_{Y_j=X_k}1=\alpha_k$ for every $k=1,\dots,2n$.

Gevrey spaces \eqref{G} and the corresponding spaces of ultradistributions have been considered on compact Lie groups and on compact manifolds in \cite{DR} and in \cite{DR16}, respectively.

By an argument similar to that in \cite{FRT20..} 
for the sub-Laplacian or in \cite[Theorem 2.4]{DR} for elliptic operators, it can be shown that if $\mathcal R$ is a positive Rockland operator of homogeneous degree $\nu$, then $f\in \mathcal G^s_{\mathcal R}(G)$ if and only if there exist constants $B,C>0$ such that for every  $k\in\mathbb N\cup\{0\}$ we have
\begin{equation}\label{EQ:Gevchar}
 \|\mathcal R^k f\|_{L^2(G)}\leq C B^{\nu k} ((\nu k)!)^s.
\end{equation} 
Since Sobolev spaces do not depend on a particular choice of the Rockland operator used in their definition, the characterisation \eqref{EQ:Gevchar} of the Gevrey spaces implies that the same is true for $\mathcal G^s_{\mathcal R}(G)$. 

Thus, we may drop the subscript $\mathcal R$ in $H^s_{\mathcal R}$ and $\mathcal G^s_{\mathcal R}$ but we may also keep using it to refer to the norms that we may be using.

\medskip
Let us now formulate the main theorem of our paper, where we consider the following four cases:
\begin{enumerate}[\textbf{$\qquad$ Case }\bf 1:]
\item $a\in\mathcal {\rm Lip}([0,T])$, $a(t)\geq a_0>0$;
\item $a\in\mathcal C^\alpha([0,T])$, $0<\alpha<1$, $a(t)\geq a_0 >0$;
\item $a\in\mathcal C^l([0,T])$, $l \geq 2$, $a(t)\geq 0$;
\item $a\in\mathcal C^\alpha([0,T])$, with $0<\alpha<2$, $a(t)\geq0$.
\end{enumerate}

These are the four cases to which we refer repeatedly throughout this paper.

\begin{theorem}\label{THM:main}
Let $G$ be a graded Lie group and let $\mathcal R$ be a positive Rockland operator of homogeneous degree $\nu$. 
Let $T>0$.
Then the following holds, referring respectively to Cases 1-4 above:
\begin{enumerate}[\text{Case} 1:]
\item Given $s\in\R$, if the initial Cauchy data $(u_0,u_1)$ are in $H_\mathcal R^{s+\frac{\nu}{2}}(G)\times H_\mathcal R^s(G)$, then there exists a unique solution of \eqref{CP} in the space $\mathcal C([0,T],H_\mathcal R^{s+\frac{\nu}{2}}(G))\cap\mathcal C^1([0,T],H_\mathcal R^s(G))$, satisfying the following inequality for all values of $t\in [0,T]$:
\begin{align}\label{inequality case 1}
\|u(t,\cdot)\|^2_{H_{\mathcal R}^{s+\frac{\nu}{2}}}+\|\partial_t u(t,\cdot)\|^2_{H_{\mathcal R}^{s}}\leq C(\|u_0\|^2_{H_{\mathcal R}^{s+\frac{\nu}{2}}}+\|u_1\|^2_{H_{\mathcal R}^{s}});
\end{align}

\item If the initial Cauchy data $(u_0, u_1)$ are in $\mathcal G^s_\mathcal R(G)\times \mathcal G^s_\mathcal R(G)$, then there exists a unique solution of \eqref{CP} in $\mathcal C^2([0,T],\mathcal G_\mathcal R^s(G))$, provided that
\[
1\leq s <1 +\frac{\alpha}{1-\alpha};
\]

\item If the initial Cauchy data $(u_0, u_1)$ are in $\mathcal G^s_\mathcal R(G)\times \mathcal G^s_\mathcal R(G)$, then there exists a unique solution of \eqref{CP} in $\mathcal C^2([0,T],\mathcal G_\mathcal R^s(G))$, provided that
\[
1\leq s <1 +\frac{l}{2};
\]

\item If the initial Cauchy data $(u_0, u_1)$ are in $\mathcal G^s_\mathcal R(G)\times \mathcal G^s_\mathcal R(G)$ then there exists a unique solution of \eqref{CP} in $\mathcal C^2([0,T],\mathcal G_\mathcal R^s(G))$, provided that
\[
1\leq s <1 +\alpha. 
\]
\end{enumerate}
\end{theorem}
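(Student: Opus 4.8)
The plan is to diagonalise the spatial operator, reduce \eqref{CP} to a one-parameter family of scalar ordinary differential equations in $t$, run a Colombini--de~Giorgi--Spagnolo type energy analysis for that family with bounds that are \emph{uniform in the spectral parameter}, and then reassemble the estimates using Plancherel's theorem and the functional-calculus descriptions \eqref{EQ:dSob} and \eqref{G} of the spaces $H^s_{\mathcal R}(G)$ and $\mathcal G^s_{\mathcal R}(G)$. Concretely: as a positive Rockland operator, $\mathcal R$ extends to a non-negative self-adjoint operator on $L^2(G)$, with spectral resolution $\mathcal R=\int_0^\infty\lambda\,dE_\lambda$; applying the spectral truncations $E_{[0,N]}$ to \eqref{CP} turns it, for each $N$, into a genuine Hilbert-space ODE on $E_{[0,N]}L^2(G)$ (since $\mathcal R E_{[0,N]}$ is bounded), and letting $N\to\infty$ reduces the problem to the scalar Cauchy problems
\[
v''(t)+a(t)\lambda\,v(t)=0,\qquad v(0)=v_0,\quad v'(0)=v_1,
\]
one for each $\lambda$ in the spectrum of $\mathcal R$. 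Writing $v(t,\lambda)$ for the solution, \eqref{EQ:dSob} and the spectral theorem give $\|u(t)\|_{H^\sigma_{\mathcal R}}^2=\int_0^\infty(1+\lambda)^{2\sigma/\nu}|v(t,\lambda)|^2\,d\mu(\lambda)$, with $d\mu$ built from the spectral measures of $u_0,u_1$, and by \eqref{G}, $f\in\mathcal G^s_{\mathcal R}(G)$ iff $\int_0^\infty e^{2A\lambda^{1/(2s)}}\,d\|E_\lambda f\|_{L^2}^2<\infty$ for some $A>0$; similar identities hold for $\partial_t u$ and, via $\partial_t^2 u=-a(t)\mathcal R u$, for $\partial_t^2 u$. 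So everything comes down to estimating $|v(t,\lambda)|$ and $|\partial_t v(t,\lambda)|$ with the correct dependence on $\lambda$.

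For \textbf{Case 1} I would use $E(t):=|\partial_t v(t)|^2+a(t)\lambda|v(t)|^2$, compute $E'(t)=a'(t)\lambda|v(t)|^2\le\frac{\|a'\|_{L^\infty}}{a_0}E(t)$ (here $a\ge a_0>0$ is crucial), and deduce $E(t)\le e^{CT}E(0)$ by Gronwall's lemma with $C=C(a_0,\|a'\|_{L^\infty})$; combined with $\tfrac{d}{dt}|v(t)|^2\le|v(t)|^2+E(t)$ this yields $|\partial_t v(t)|^2+(1+\lambda)|v(t)|^2\le C(|v_1|^2+(1+\lambda)|v_0|^2)$ for $t\in[0,T]$, with $C$ independent of $\lambda$ and $t$. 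Multiplying by $(1+\lambda)^{2s/\nu}$ and integrating against $d\mu$ gives \eqref{inequality case 1}; applied to differences of solutions it gives uniqueness, and strong continuity of $t\mapsto u(t)\in H^{s+\nu/2}_{\mathcal R}$ and $t\mapsto\partial_t u(t)\in H^s_{\mathcal R}$ follows by dominated convergence from the $\lambda$-uniformity.

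In \textbf{Cases 2--4} the coefficient is non-Lipschitz and/or may vanish, so the natural energy is not almost conserved and one works with an \emph{approximate} energy, replacing $a$ by a Friedrichs regularisation $a_\varepsilon=a*\psi_\varepsilon$ ($\psi$ an even non-negative mollifier) and, when $a$ may be zero, adding a positivity shift $\delta>0$: $E_{\varepsilon,\delta}(t):=|\partial_t v(t)|^2+(a_\varepsilon(t)+\delta)\lambda|v(t)|^2$. In Case 2 (no shift, $a\ge a_0$) the bounds $|a_\varepsilon-a|\lesssim\varepsilon^\alpha$, $|a_\varepsilon'|\lesssim\varepsilon^{\alpha-1}$ give $E_\varepsilon'(t)\lesssim(\varepsilon^\alpha\sqrt\lambda+\varepsilon^{\alpha-1})E_\varepsilon(t)$, and $\varepsilon=\lambda^{-1/2}$ (a constant for $\lambda\le1$) balances the two terms, leading by Gronwall to
\[
|\partial_t v(t)|^2+(1+\lambda)|v(t)|^2\le C(1+\lambda)^{M}e^{c\lambda^{(1-\alpha)/2}t}\bigl(|v_1|^2+(1+\lambda)|v_0|^2\bigr).
\]
When $a\ge0$ may vanish, the shift makes $(a_\varepsilon+\delta)\lambda|v|^2$ control $\lambda|v|^2$, and the regularity of $a$ enters through Glaeser-type inequalities for non-negative functions: for $a\in\mathcal C^2$ one has $|a_\varepsilon'|^2\le 2\|a''\|_{L^\infty}(a_\varepsilon+\delta)$, which bounds the dangerous term $a_\varepsilon'\lambda|v|^2$ by $C\delta^{-1/2}E_{\varepsilon,\delta}$, while $2\delta\lambda\,\Rep(\bar v\,\partial_t v)\le 2\sqrt{\delta\lambda}\,E_{\varepsilon,\delta}$, so that $\delta=\lambda^{-1/2}$ gives $E_{\varepsilon,\delta}'\lesssim\lambda^{1/4}E_{\varepsilon,\delta}$ --- the sharp Case 3 bound for $l=2$. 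For larger $l$ (and throughout Case 4) one needs the corresponding higher-order or H\"older Glaeser inequalities together with a more careful --- possibly $t$-dependent --- choice of $\varepsilon$ and $\delta$, in the spirit of the Colombini--de~Giorgi--Spagnolo construction; the outcome is
\[
|\partial_t v(t)|^2+(1+\lambda)|v(t)|^2\le C(1+\lambda)^{M}e^{c\lambda^{\kappa}t}\bigl(|v_1|^2+(1+\lambda)|v_0|^2\bigr)
\]
with $\kappa=\tfrac1{l+2}$ in Case 3 and $\kappa=\tfrac1{2(1+\alpha)}$ in Case 4.

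To finish Cases 2--4, suppose $(u_0,u_1)\in\mathcal G^s_{\mathcal R}(G)\times\mathcal G^s_{\mathcal R}(G)$, i.e.\ $\int_0^\infty e^{2A\lambda^{1/(2s)}}((1+\lambda)\,d\|E_\lambda u_0\|^2+d\|E_\lambda u_1\|^2)<\infty$ for some $A>0$. The stated ranges $1\le s<1+\tfrac{\alpha}{1-\alpha}$, $1\le s<1+\tfrac l2$, $1\le s<1+\alpha$ are \emph{exactly} the conditions $\kappa<\tfrac1{2s}$ for the respective $\kappa=\tfrac{1-\alpha}{2},\ \tfrac1{l+2},\ \tfrac1{2(1+\alpha)}$, so $(1+\lambda)^{M}e^{c\lambda^\kappa t}\le Ce^{(A-A')\lambda^{1/(2s)}}$ uniformly for $t\in[0,T]$, for any $A'\in(0,A)$; inserting the energy bounds into the spectral representation then shows that $u(t)$ and $\partial_t u(t)$ lie in $\mathcal G^s_{\mathcal R}(G)$, and since $\partial_t^2 u(t)=-a(t)\mathcal R u(t)$ and multiplication by $\lambda$ only changes the Gevrey parameter, also $\partial_t^2 u(t)\in\mathcal G^s_{\mathcal R}(G)$, giving $u\in\mathcal C^2([0,T],\mathcal G^s_{\mathcal R}(G))$; continuity in $t$ and uniqueness follow once more from the $\lambda$-uniform estimates. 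Existence throughout is obtained by constructing $u(t)$ via this spectral formula and checking, from the uniform estimates, that the resulting $u$ lies in the asserted space and solves \eqref{CP}. The hard part will be the sharp energy estimate in Cases 3 and 4 --- reaching the exponents $\tfrac1{l+2}$ and $\tfrac1{2(1+\alpha)}$, rather than the larger exponent that a naive shifted energy produces --- which is where Glaeser's inequality and the delicate balancing of regularisation scale, positivity shift and frequency must be brought in, and where the regularity of $a$ is converted into a gain of the Gevrey index.
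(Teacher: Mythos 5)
Your overall architecture coincides with the paper's: decouple \eqref{CP} into the scalar problems $v''+a(t)\lambda v=0$, prove energy estimates uniform in the spectral parameter, and reassemble via Plancherel and the functional-calculus descriptions of $H^s_{\mathcal R}$ and $\mathcal G^s_{\mathcal R}$. Your diagonalisation through the abstract spectral theorem for the self-adjoint extension of $\mathcal R$ is a legitimate variant of what the paper does (group Fourier transform plus the Hulanicki--Jenkins--Ludwig discreteness of the spectrum of $\pi(\mathcal R)$, which gives the entries $\pi_m^2$ in place of your $\lambda$); both land on the same one-parameter family of ODEs, and your reassembly, the absorption of the polynomial prefactors $(1+\lambda)^M$ into the exponential via the strict inequality $\kappa<\tfrac{1}{2s}$, and the requirement $KT<A$ all match the paper's Section \ref{SEC:proof}. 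In Case 1 your energy is the paper's symmetriser in disguise, and in Case 2 your approximate energy $|v'|^2+a_\varepsilon(t)\lambda|v|^2$ with $\varepsilon=\lambda^{-1/2}$ reproduces the paper's critical growth $\lambda^{(1-\alpha)/2}=\beta^{1-\alpha}$ by a simpler route than the paper's Colombini--Kinoshita transformation $V=e^{-\rho\beta^{1/s}}(\det H)^{-1}HW$ built from regularised characteristic roots; that part is sound.

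The gap is in Cases 3 and 4, which are the technical core of Proposition \ref{lemma}. For Case 3 you verify only $l=2$, where the pointwise Glaeser inequality $|a'|^2\le 2\|a''\|_{L^\infty}a$ is available, and for $l>2$ you appeal to unspecified higher-order Glaeser inequalities and a ``more careful choice of $\varepsilon$ and $\delta$''. There is no pointwise Glaeser inequality of order $l>2$ (already $a(t)=t^2$ shows $|a'|\lesssim a^{1-1/l}$ fails for $l>2$); what is actually needed is an integral estimate of Kinoshita--Spagnolo type controlling $\int_0^T|a_\varepsilon'(t)|\,(a_\varepsilon(t)+\delta)^{-1+1/l}\,dt$ --- precisely the lemma from \cite{KS2006}, \cite{GR2013} that the paper invokes through its quasi-symmetriser $Q^{(2)}_\varepsilon=S+2\varepsilon^2\,\mathrm{diag}(1,0)$ --- and without it your argument does not reach the exponent $\lambda^{1/(l+2)}$. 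In Case 4 you assert $\kappa=\tfrac{1}{2(1+\alpha)}$ with no derivation, and here the issue is one of substance: for $a\ge 0$ merely in $\mathcal C^\alpha$, the shifted-energy computation you outline yields $\varepsilon=\lambda^{-1/(2+\alpha)}$ and hence $\kappa=\tfrac{1}{2+\alpha}$, i.e.\ only $s<1+\tfrac{\alpha}{2}$, and by the classical counterexamples one cannot in general do better in terms of the H\"older exponent of $a$ itself. The paper reaches $s<1+\alpha$ only after relabelling so that $\alpha$ is the H\"older exponent of $\sqrt{a}$ (it assumes $a\in\mathcal C^{2\alpha}$, $0<\alpha<1$) and then runs the Case 2 machinery with the shifted regularised roots $\mp\sqrt{a}*\varphi_\varepsilon$ plus $\varepsilon^\alpha$, $2\varepsilon^\alpha$ and $\varepsilon=\beta^{-1/(1+\alpha)}$. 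Your sketch neither performs this reduction to $\sqrt{a}$ nor supplies an alternative, so the Case 4 exponent in your proposal is read off from the statement rather than proved.
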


As it will follow from the proof, in Cases 2 and 4, one can take the equalities $s=1 +\frac{\alpha}{1-\alpha}$ and $s=1 +\alpha$, respectively, provided that $T>0$ is small enough.
We refer to \cite{GR2012, GR2013} concerning the sharpness of the above Gevrey indices in the case of $G=\Rn$ and $\mathcal R=-\Delta$, and for further relevant references for that case.

\medskip
Let us formulate a corollary from Theorem \ref{THM:main} showing the local loss of regularity for the Cauchy problem \eqref{CP}. We recall that any graded Lie group $G$ can be identified, for example through the exponential mapping, with the Euclidean space $\Rn$ where $n$ is the topological dimension of $G$. Then, if $\nu_1,\ldots,\nu_n$ are the dilation weights on $G$ as in \eqref{EQ:dils}, for any $s\in\R$ we have the local Sobolev embedding theorems:
\begin{equation}\label{EQ:Sobemb}
H^{s/\nu_1}_{loc}(\Rn)\subset H^s_{\mathcal R, loc}(G)\subset H^{s/\nu_n}_{loc}(\Rn),
\end{equation} 
see \cite[Theorem 4.4.24]{FR2016}. If $G$ is a stratified Lie group, we have $\nu_1=1$ and $\nu_n$ is the step of $G$, i.e. the number of steps in the stratification of its Lie algebra. In other words, if $G$ is a stratified Lie group of step $r$ and $H^s(G)$ is the Sobolev space defined using (any) sub-Laplacian on $G$, then the embeddings \eqref{EQ:Sobemb} are reduced to
\begin{equation}\label{EQ:Sobembs}
H^{s}_{loc}(\Rn)\subset H^s_{loc}(G)\subset H^{s/r}_{loc}(\Rn).
\end{equation} 
These embeddings are sharp, see Folland \cite{F75}. Consequently, using the characterisation \eqref{EQ:Gevchar} of $\mathcal G^s_{\mathcal R}(G)$, we also obtain the embeddings 
\begin{equation}\label{EQ:Gevemb}
\mathcal G^{\nu_1 s}_{loc}(\Rn)\subset \mathcal G^s_{\mathcal R, loc}(G)\subset \mathcal G^{s\nu_n}_{loc}(\Rn),
\end{equation} 
where the space $\mathcal G^{\sigma}_{loc}(\Rn)$ is the usual Euclidean Gevrey space, namely, the space of all smooth functions $f\in \mathcal C^\infty(\Rn)$ such that for every compact set $K\subset\Rn$ there exist two constants $B,C>0$ such that for every $\alpha$ we have
\begin{equation}\label{gevreyRn}
|\partial^\alpha f(x)|\leq C B^{|\alpha|}(\alpha!)^\sigma \quad\textrm{ for all } x\in K.
\end{equation}
Consequently, if $G$ is a stratified Lie group of step $r$ we have the embeddings
\begin{equation}\label{EQ:Gevembs}
\mathcal G^{s}_{loc}(\Rn)\subset \mathcal G^s_{\mathcal R, loc}(G)\subset \mathcal G^{s r}_{loc}(\Rn).
\end{equation} 

Consequently, using these embeddings, we obtain the following local in space well-posedness result using the usual Euclidean Gevrey spaces. Here we may also assume that the Cauchy data are compactly supported due to the finite propagation speed of singularities. To emphasise the appearing phenomenon of local loss of Euclidean regularity we formulate it in the simplified setting of stratified Lie groups, with topological identification $G\sim\Rn$.

\begin{corollary}\label{COR:main}
Let $G\sim \Rn$ be a stratified Lie group of step $r$ and let $\mathcal R$ be a positive Rockland operator of homogeneous degree $\nu$ (for example, $\mathcal R$ can be a positive sub-Laplacian in which case we have $\nu=2$). 
Assume that the Cauchy data $(u_0,u_1)$ are compactly supported.
Then the following holds, referring respectively to Cases 1-4 above:
\begin{enumerate}[\text{Case} 1:]
\item Given $s\in\R$, if $(u_0,u_1)$ are  in  $H^{s+\frac{\nu}{2}}(\Rn)\times H^s(\Rn)$, then there exists a unique solution of \eqref{CP} in $\mathcal C([0,T],H^{(s+\frac{\nu}{2})/r}(\Rn))\cap\mathcal C^1([0,T],H^{s/r}(\Rn))$, satisfying the following inequality for all values of $t\in [0,T]$:
\begin{align}\label{inequality case 1}
\|u(t,\cdot)\|^2_{H^{(s+\frac{\nu}{2})/r}}+\|\partial_t u(t,\cdot)\|^2_{H^{s/r}}\leq C(\|u_0\|^2_{H^{s+\frac{\nu}{2}}}+\|u_1\|^2_{H^{s}});
\end{align}

\item If $(u_0, u_1)$ are in $\mathcal G^s(\Rn)\times \mathcal G^s(\Rn)$, then there exists a unique solution of \eqref{CP} in $\mathcal C^2([0,T],\mathcal G^{sr}(\Rn))$, provided that
\[
1< s <1 +\frac{\alpha}{1-\alpha};
\]

\item If $(u_0, u_1)$ are in $\mathcal G^s(\Rn)\times \mathcal G^s(\Rn)$, then there exists a unique solution of \eqref{CP} in $\mathcal C^2([0,T],\mathcal G^{sr}(\Rn))$, provided that
\[
1< s <1 +\frac{l}{2};
\]

\item If $(u_0, u_1)$ are in $\mathcal G^s(\Rn)\times \mathcal G^s(\Rn)$ then there exists a unique solution of \eqref{CP} in $\mathcal C^2([0,T],\mathcal G^{sr}(\Rn))$, provided that
\[
1< s <1 +\alpha. 
\]
\end{enumerate}
\end{corollary}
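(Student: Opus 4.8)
The plan is to read off the statement from Theorem \ref{THM:main} by squeezing the group-adapted spaces $H^{s}_{\mathcal R}(G)$ and $\mathcal G^{s}_{\mathcal R}(G)$ between their Euclidean analogues via the embeddings \eqref{EQ:Sobembs} and \eqref{EQ:Gevembs}, and to use the finite propagation speed of (Gevrey) singularities for \eqref{CP} to convert the (purely local) conclusions of those embeddings into the global ones appearing in the statement.

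First I would transfer the Cauchy data onto $G$. Since $(u_0,u_1)$ are compactly supported, $u_0\in H^{s+\frac{\nu}{2}}(\Rn)$ in particular lies in $H^{s+\frac{\nu}{2}}_{loc}(\Rn)$, hence by the left inclusion in \eqref{EQ:Sobembs} in $H^{s+\frac{\nu}{2}}_{\mathcal R,loc}(G)$, and by compact support in $H^{s+\frac{\nu}{2}}_{\mathcal R}(G)$; likewise $u_1\in H^{s}_{\mathcal R}(G)$. In Cases 2--4 the same argument with \eqref{EQ:Gevembs} places compactly supported data from $\mathcal G^{s}(\Rn)$ into $\mathcal G^{s}_{\mathcal R}(G)$, and the hypothesis $s>1$ is exactly what makes the space of compactly supported functions in $\mathcal G^{s}(\Rn)$ nontrivial. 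Applying Theorem \ref{THM:main} in the corresponding case and under the indicated restriction on $s$ then yields a unique solution $u$ of \eqref{CP}, lying in $\mathcal C([0,T],H^{s+\frac{\nu}{2}}_{\mathcal R}(G))\cap\mathcal C^{1}([0,T],H^{s}_{\mathcal R}(G))$ and satisfying the group-level energy estimate in Case 1, and in $\mathcal C^{2}([0,T],\mathcal G^{s}_{\mathcal R}(G))$ in Cases 2--4.

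Next I would transport the solution back to the Euclidean scale, and this is where the step $r$ of $G$ and the announced loss of regularity appear: the right inclusions $H^{\sigma}_{\mathcal R,loc}(G)\subset H^{\sigma/r}_{loc}(\Rn)$ and $\mathcal G^{\sigma}_{\mathcal R,loc}(G)\subset\mathcal G^{\sigma r}_{loc}(\Rn)$ give $u(t,\cdot)\in H^{(s+\frac{\nu}{2})/r}_{loc}(\Rn)$, $\partial_t u(t,\cdot)\in H^{s/r}_{loc}(\Rn)$ in Case 1, and $u(t,\cdot)\in\mathcal G^{sr}_{loc}(\Rn)$ in Cases 2--4, with continuity in $t$ preserved because these embeddings are continuous. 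To upgrade this to the global Euclidean statement I would use finite propagation speed of singularities --- which for a positive sub-Laplacian is the genuine finite propagation speed of the wave equation \eqref{CP} --- so that, the data being compactly supported, the (Gevrey) singular support of $u(t,\cdot)$ stays inside a fixed compact $K\subset\Rn$ for all $t\in[0,T]$; fixing $\chi\in\mathcal C^{\infty}_c(\Rn)$ with $\chi\equiv 1$ near $K$, the function $(1-\chi)u(t,\cdot)$ is then globally smooth (resp. globally in $\mathcal G^{sr}(\Rn)$), so $u(t,\cdot)$ itself sits in the global spaces, and in Case 1 the quantitative bound is inherited from its group counterpart via the continuity of the embedding restricted to the fixed compact set (together with the reverse, data-side embedding that produced the group norms from the Euclidean ones).

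The step I expect to be the main obstacle is precisely this local-to-global passage on the Euclidean side: the embeddings \eqref{EQ:Sobemb}--\eqref{EQ:Gevemb} are intrinsically local, since the anisotropic dilations of $G$ and the isotropic ones of $\Rn$ differ at infinity, so one has to genuinely invoke the finite speed of propagation of (Gevrey) singularities for \eqref{CP} and verify that the compact set $K$ and all ensuing constants can be chosen uniformly in $t\in[0,T]$. The remainder is a routine concatenation of Theorem \ref{THM:main} with the embedding theorems recalled above.
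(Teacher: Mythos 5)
Your proposal is correct and follows essentially the same route the paper takes: the corollary is obtained from Theorem \ref{THM:main} by passing the compactly supported Euclidean data into the group-adapted spaces via the left inclusions in \eqref{EQ:Sobembs} and \eqref{EQ:Gevembs}, and returning the solution via the right inclusions, with the loss by the step $r$, the compact-support/finite-propagation remark being exactly the one the paper itself makes just before stating the corollary. Your write-up merely spells out in more detail the local-to-global bookkeeping that the paper leaves implicit.
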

The statements in Cases 2-4 for $s=1$ are not so interesting, with the analytic well-posedness known in these case anyway, see Bony and Shapira \cite{Bony-Shapira:analytic-IM-1972}.

For $G=\Rn$ and $\mathcal R$ being the Laplacian, we have $r=1$ and there is no loss of regularity in any of the Cases 1-4, when the results are known from
\cite{CDG1979,CK2002,GR2012,GR2013,KS2006}.

However, already on the Heisenberg group with step $r=2$, we observe the local loss of regularity in Euclidean Sobolev and Gevrey spaces in all statements of Cases 1-4 in Corollary \ref{COR:main}.

We also note that using local Sobolev and Gevrey embeddings \eqref{EQ:Sobemb} and \eqref{EQ:Gevemb}, it is easy to formulate an extension of Corollary \ref{COR:main} to general graded Lie groups.

\section{Preliminaries on graded Lie groups and Rockland operators}
\label{SEC:prelim}

In this section we recall some preliminaries and fix the notation concerning the Fourier analysis on graded Lie groups. We refer to \cite{FS} and to \cite[Chapter 5]{FR2016} for further details.

Thus, a connected and simply connected Lie group $G$ is called graded when its Lie algebra is graded in the sense of the decomposition \eqref{EQ:graded}.

A Lie algebra $\mathfrak g$ is stratified if it is graded and if its first stratum $V_1$ generates $\mathfrak g$ as an algebra. Thus, in this case every element of the Lie algebra can be written as a linear combination of elements in  $V_1$ and their iterated commutators.
A Lie group is stratified when it is connected, simply connected and its Lie algebra is stratified.

Furthermore, if there are $r$ non zero $V_j$'s in the vector space decomposition \eqref{EQ:graded}, then the group (respectively the algebra) is said to be stratified of step $r$.

From the definition of a stratified Lie algebra, it follows that, assuming that $V_1$ has dimension $k$, any basis $\{X_1,\dots,X_k\}$ for $V_1$ forms a H\"ormander system, see \cite{H1967}, and we can consider its associated sub-Laplacian operator that is also a positive Rockland operator:
 \begin{align}\label{slp}
 \slp:=-\big(X_1^2+\dots+X_k^2\big).
 \end{align}
 
\begin{example}[The Heisenberg group]
A classical example of a graded (stratified) Lie group is the Heisenberg group $\mathbb H^n$ that might be seen as the manifold $\R^{2n+1}$ endowed with the group law
\[
(x,y,t)(x',y',t'):=(x+x',y+y',t+t'+\frac{1}{2}(x\cdot y'-x'\cdot y)),
\]
 where $(x,y,t),\, (x',y',t')\in\R^n\times\R^n\times\R\sim\h^n$.
The \textit{Heisenberg Lie algebra} $\mathfrak{h}^n$ associated with the Heisenberg group is the space of all the left-invariant vector fields of $\mathbb{H}^n$ and it admits the following canonical basis:
\begin{align}\notag
&X_j=\partial_{x_j}-\frac{y_j}{2}\partial_t,\\\notag
&Y_j=\partial_{y_j}+\frac{x_j}{2}\partial_t,\\\notag
&T=\partial_t.
\end{align}
The former vector fields satisfy the canonical commutation relations
\[
[X_j,Y_j]=T,\quad\forall\,j=1,\dots,n,
\] 
and all the other possible commutators are zero. Therefore, the Heisenberg group is a graded (stratified) Lie group of step $2$, whose Lie algebra admits the vector space decomposition
\begin{align*}
\mathfrak h^n= V_1\oplus V_2,
\end{align*}
where
\begin{align*}
V_1=\sum_{j=1}^n \R X_j \oplus \R Y_j\quad\text{ and }\quad V_2=\R T.
\end{align*}
Hypoellipticity and other questions on the Heisenberg group have a long history, see e.g. Taylor \cite{T1986}, Folland \cite{Fol}, or Thangavelu \cite{thangavelu}, and many references therein.
\end{example}

From now on, we consider $G$ to be a graded Lie group, even if some of the following definitions and remarks hold in a more general setting.

Let $\pi$ be a representation of $G$ on the separable Hilbert space $\mathcal H_\pi$. A vector $v\in\mathcal H_\pi$ is said to be smooth or of type $\mathcal C^\infty$ if the function 
\[
G\ni x\mapsto \pi(x)v\in\mathcal H_\pi
\]
is of class $\mathcal C^\infty$. The space of all smooth vectors of a representation $\pi$ is denoted by $\mathcal H_\pi^\infty$.
Let $\mathfrak g $ be the Lie algebra of $G$ and let $\pi$ be a strongly continuous representation of $G$ on a Hilbert space $\mathcal H_\pi$. For every $X\in\mathfrak g$ and $v\in\mathcal H_\pi^\infty$ we define 
\[
d\pi(X)v:=\lim_{t\rightarrow 0}\frac{1}{t}\Big(\pi\big(\exp_G(tX)\big)v-v\Big).
\] 
Then $d\pi$ is a representation of $\mathfrak g$ on $\mathcal H_\pi^\infty$ (see e.g. \cite[Proposition 1.7.3]{FR2016}) called the infinitesimal representation associated to $\pi$. By abuse of notation, we will often still denote it by $\pi$, therefore, for any $X\in\mathfrak g$, we write $\pi(X)$ meaning $d\pi(X)$.

Any left-invariant differential operator $T$ on $G$, according to the Poincar\'e-Birkhoff-Witt theorem, can be written in a unique way as a finite sum
\begin{equation}\label{EQ:invs}
T=\sum_{|\alpha|\leq M}c_\alpha X^\alpha,
\end{equation} 
where all but finitely many of the coefficients $c_\alpha\in\C$ are zero and $X^\alpha=X_1\dots X_{|\alpha|}$, with $X_j\in\mathfrak g$. This allows one to look at any left-invariant differential operator $T$ on $G$ as an element of the universal enveloping algebra $\mathfrak U(\mathfrak g)$ of the Lie algebra of $G$. Therefore, the family of infinitesimal representations $\big\{\pi(T),\,\pi\in\widehat G \big\}$ yields a field of operators that turns to be the symbol associated with the operator $T$. 

Let $\pi\in\widehat G$ and let $\mathcal R$ be a positive Rockland operator of homogeneous degree $\nu>0$, then using formula \eqref{EQ:invs}, the infinitesimal representation of $\mathcal R$ associated to $\pi$ is 
\[
\pi(\mathcal R)=\sum_{[\alpha]=\nu}c_\alpha\pi({X})^\alpha,
\]
where $\pi({X})^\alpha=\pi({X}^\alpha)=\pi(X_1^{\alpha_1}\cdots X_n^{\alpha_n})$ and $[\alpha]=\nu_1\alpha_1+\cdots+\nu_n\alpha_n$ is the homogeneous degree of the multiindex $\alpha$, with $X_j$ being homogeneous of degree $\nu_j$.

The operator $\mathcal R$ and its infinitesimal representations $\pi(\mathcal R)$ are densely defined on $\mathcal D(G)\subset L^2(G)$ and $\mathcal H_\pi^\infty\subset \mathcal H_\pi$, respectively, see e.g. \cite[Proposition 4.1.15]{FR2016}. We denote by $\mathcal R_2$ the self-adjoint extension of $\mathcal R$ on $L^2(G)$ and we keep the same notation $\pi(\mathcal R)$ for the self-adjoint extensions on $\mathcal H_\pi$ of the infinitesimal representations. Recalling the spectral theorem for unbounded operators \cite[Theorem VIII.6]{RS1980}, we can consider the spectral measures $E$ and $E_\pi$ corresponding to $\mathcal R_2$ and $\pi(\mathcal R)$, so that we have
\begin{align}\notag
\mathcal R_2=\int_\R \lambda dE(\lambda)\quad\text{and}\quad\pi(\mathcal R)=\int_\R\lambda dE_\pi(\lambda).
\end{align} 
Furthermore,  for any $f\in L^2(G)$ we have
\begin{align}\label{functionalcalculus}
\mathcal F\Big(\phi\big(\mathcal R\big)f\Big)(\pi)=\phi\big(\pi(\mathcal R)\big)\widehat f(\pi),
\end{align}
for any measurable bounded function $\phi$ on $\R$, see e.g. \cite[Corollary 4.1.16]{FR2016}.
The infinitesimal representations $\pi(\mathcal R)$ of a positive Rockland operator are also positive, due to the relations between their spectral measures. In particular, Hulanicki, Jenkins and Ludwig showed in  \cite{HJL1985} that the spectrum of $\pi(\mathcal R)$, with $\pi\in\widehat G\setminus\{1\}$, is discrete and lies in $(0,\infty)$. This implies that we can choose an orthonormal basis for $\mathcal H_\pi$ such that the infinite matrix associated to the self-adjoint operator $\pi(\mathcal R)$ has the form
\begin{align}\label{EQ:R-spec}
\pi(\mathcal R)=\begin{pmatrix}
\pi^2_1 & 0      & \dots    & \dots \\
0      & \pi^2_2 & 0    & \dots \\
\vdots&   0       & \ddots  &          \\
\vdots& \vdots  &             & \ddots
\end{pmatrix},
\end{align}
where $\pi_j$ are strictly positive real numbers and $\pi\in\widehat G\setminus\{1\}$.

\section{Parameter dependent energy estimates}
\label{SEC:ODE}

In this section we prove certain energy estimates for second order ordinary differential equations with explicit dependence on parameters. This will be crucial in the proof of Theorem \ref{THM:main} where the parameters will correspond to the spectral decomposition \eqref{EQ:R-spec} of the infinitesimal representations of the Rockland operators.

Results of the following type have been of use in different estimates related to weakly hyperbolic partial differential equations, such as \cite{CK2002} and \cite{GR}. However, in those papers the conclusions rely on more general results, see \cite{GR2012}. We partly follow the argument in \cite{GR} based on a standard reduction to a first order system. Consequently, we carry out different types of arguments depending on assumptions in each of the cases, altogether allowing us to formulate the precise dependence on parameters for ordinary differential equations corresponding to the propagation coefficient $a(t)$ as in Cases 1-4 of Theorem \ref{THM:main}, to which we refer in the following statement. 

\begin{proposition}\label{lemma}
Let $\beta>0$ be a positive constant and let $a(t)$ be a function that behaves according to Cases $1,2,3$ and $4$ in Theorem \ref{THM:main}. Let $T>0$. Consider the following Cauchy problem:
\begin{align}\label{ODE}
\begin{cases}
v''(t)+\beta^2 a(t)v(t)=0\quad\text{with }t\in(0,T],\\
v(0)=v_0\in\C,\\
 v'(0)=v_1\in\C.
\end{cases}
\end{align}  
Then the following holds: 
\begin{enumerate}[\text{Case} 1:]
\item There exists a positive constant $C>0$  such that for all $t\in [0,T]$ we have
\[
\beta^2|v(t)|^2+|v'(t)|^2\leq C(\beta^2|v_0|^2+|v_1|^2).
\]
\item There exist two positive constants $C,K>0$ such that for all $t\in [0,T]$ we have
\begin{equation}\label{EQ:case2}
\beta^2|v(t)|^2+|v'(t)|^2\leq Ce^{Kt \beta^{\frac{1}{s}}}(\beta^2|v_0|^2+|v_1|^2),
\end{equation} 
for any
$1\leq s < 1+\frac{\alpha}{1-\alpha}$. Moreover, there exists a constant $k>0$ such that
for any $\beta_0\geq 1$ the estimate \eqref{EQ:case2} holds for $K= k\beta_0^{1-\alpha-\frac1s}$ for all $\beta\geq \beta_0$.
\item There exist two positive constants $C,K>0$ such that for all $t\in [0,T]$ we have
\[
\beta^2|v(t)|^2+|v'(t)|^2\leq C(1+\beta^\frac{l}{\sigma})e^{K\beta^\frac{1}{\sigma}}\big(\beta^2|v_0|^2+|v_1|^2\big),
\]
with $\sigma= 1+\frac{l}{2}$. 
\item There exist two positive constants $C,K>0$ such that 
\begin{equation}\label{EQ:case4}
\beta^2|v(t)|^2+|v'(t)|^2\leq C(1+\beta^\frac{\alpha}{\alpha+1})e^{Kt\beta^\frac{1}{s}}(\beta^2|v_0|^2+|v_1|^2),
\end{equation} 
for any $1\leq s<1+\alpha$.
Moreover, there exists a constant $k>0$ such that
for any $\beta_0\geq 1$ the estimate \eqref{EQ:case4} holds for $K= k\beta_0^{\frac{1}{1+\alpha}-\frac1s}$ for all $\beta\geq \beta_0$.
\end{enumerate}
The constants $C$ in the above inequalities may depend on $T$ but not on $\beta$.
\end{proposition}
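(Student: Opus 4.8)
The strategy is the classical energy-method approach of Colombini–De Giorgi–Spagnolo adapted to the scalar ODE with parameter $\beta$, treating each of the four cases with the appropriate energy functional. In each case I reduce \eqref{ODE} to a first-order system for $V(t) = (\beta a(t)^{1/2} v(t), v'(t))^{\mathsf T}$ (in the strictly positive cases) or a regularised variant, and estimate a suitable energy $E(t)$ from above via a Gronwall argument; the size of the constant $K$ and the exponent of $\beta$ appearing in the bound come from how the "bad" terms (the $t$-derivatives of $a$, or of its regularisation) are controlled.

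For \textbf{Case 1} ($a \in \mathrm{Lip}$, $a \geq a_0 > 0$): set $E(t) = \beta^2 a(t)|v(t)|^2 + |v'(t)|^2$. Differentiating, $E'(t) = \beta^2 a'(t)|v(t)|^2 \leq \frac{\|a'\|_\infty}{a_0} E(t)$, so Gronwall gives $E(t) \leq e^{CT} E(0)$, and comparing $E$ with $\beta^2|v|^2 + |v'|^2$ using $a_0 \leq a(t) \leq \|a\|_\infty$ yields the claim with a $\beta$-independent constant. For \textbf{Case 2} ($a \in \mathcal C^\alpha$, $a \geq a_0 > 0$): here $a$ is not differentiable, so I mollify, setting $a_\varepsilon(t) = (a * \rho_\varepsilon)(t)$ with $\varepsilon = \varepsilon(\beta) = \beta^{-\gamma}$ for a parameter $\gamma$ to be optimised. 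Then $|a - a_\varepsilon| \lesssim \varepsilon^\alpha$ and $|a_\varepsilon'| \lesssim \varepsilon^{\alpha - 1}$. Using the energy $E_\varepsilon(t) = \beta^2 a_\varepsilon(t)|v(t)|^2 + |v'(t)|^2$, one gets $E_\varepsilon'(t) \lesssim \left( \beta^2 \varepsilon^\alpha + \varepsilon^{\alpha-1}\right) E_\varepsilon(t) \Big/ (\text{lower bound on } a_\varepsilon)$; balancing $\beta^2 \varepsilon^\alpha$ against $\varepsilon^{\alpha-1}$ fails to help, so instead one balances so that the exponent of $\beta$ in the Gronwall factor becomes $\beta^{1/s}$, forcing $\gamma = \frac{1}{s} \cdot \frac{1}{?}$; the arithmetic shows the constraint $\frac{2 - \alpha\gamma}{1} \leq \frac1s$ and $\frac{(1-\alpha)\gamma}{1} \leq \frac1s$ are simultaneously solvable precisely when $s < 1 + \frac{\alpha}{1-\alpha}$, which is where that index comes from. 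The refined statement with $K = k\beta_0^{1-\alpha-1/s}$ follows by keeping $\varepsilon$ fixed in terms of $\beta_0$ rather than $\beta$ on the range $\beta \geq \beta_0$.

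For \textbf{Cases 3 and 4}, where $a$ may vanish, the energy must be stabilised by adding a small constant: use $a_\varepsilon(t) = (a*\rho_\varepsilon)(t) + \varepsilon^{?}$ (Case 4) or exploit the $\mathcal C^l$ regularity to expand $a$ in a Taylor-type argument near its zeros (Case 3, following the CDGS / Colombini–Kinoshita scheme), and again take $\varepsilon = \varepsilon(\beta)$ a negative power of $\beta$. In Case 3, with $a \in \mathcal C^l$, $l \geq 2$, the key inequality is the Glaeser-type bound $|a'(t)| \lesssim a(t)^{1 - 1/l} + (\text{smth})$, or more precisely the estimate controlling $|a_\varepsilon'|/a_\varepsilon^{1/2}$ by $\varepsilon^{-1}$-type quantities modulated by $l$, producing the threshold $s < 1 + \frac{l}{2}$; the polynomial prefactor $(1 + \beta^{l/\sigma})$ records the discrepancy between $a$ and $a_\varepsilon$ integrated over $[0,T]$. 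In Case 4, combining Hölder regularity $\alpha < 2$ with the stabilisation $+\varepsilon^\alpha$ gives the factor $(1 + \beta^{\alpha/(\alpha+1)})$ and the index $s < 1 + \alpha$, with the choice $\varepsilon = \beta^{-2/(\alpha+1)}$ balancing $\beta^2 \varepsilon^\alpha \sim \varepsilon^{\alpha - 2}$-type terms.

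\textbf{Main obstacle.} The routine part is the Gronwall step; the delicate part is the bookkeeping in Cases 2–4 of exactly how the mollification parameter $\varepsilon = \varepsilon(\beta)$ must be chosen (as a power $\beta^{-\gamma}$) so that the resulting exponential growth rate is $e^{Kt\beta^{1/s}}$ with the stated constraint on $s$, and simultaneously so that the approximation error $\int_0^T \beta^2 |a(t) - a_\varepsilon(t)|\,dt$ contributes only the stated polynomial-in-$\beta$ prefactor rather than an exponential one. Keeping these two competing requirements compatible is precisely what pins down the Gevrey indices $1 + \frac{\alpha}{1-\alpha}$, $1 + \frac{l}{2}$, and $1 + \alpha$, and getting the uniform-in-$\beta \geq \beta_0$ refinement with $K = k\beta_0^{\cdots}$ requires care that the constants hidden in "$\lesssim$" do not secretly depend on $\beta$.
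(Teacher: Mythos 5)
Your Case 1 is correct and is essentially the paper's argument (the symmetrised energy $(S(t)V,V)=2a(t)\beta^2|v|^2+2|v'|^2$ there is your $E(t)$ up to a factor of $2$). The problem is in Cases 2 and 4, where your key cross-term estimate carries a spurious power of $\beta$ that destroys the stated thresholds. With $E_\epsilon(t)=\beta^2 a_\epsilon(t)|v(t)|^2+|v'(t)|^2$ one gets $E_\epsilon'=\beta^2a_\epsilon'|v|^2+2\beta^2(a_\epsilon-a)\Rep(v\bar v')$, and the second term must be bounded by $2\beta^2\epsilon^\alpha|v||v'|=2\epsilon^\alpha\beta\,(\beta|v|)|v'|\leq \epsilon^\alpha\beta\,(\beta^2|v|^2+|v'|^2)\lesssim \beta\epsilon^\alpha E_\epsilon$, i.e.\ it contributes $\beta\epsilon^\alpha$, not $\beta^2\epsilon^\alpha$, to the logarithmic derivative. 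Your two constraints $2-\alpha\gamma\leq\frac1s$ and $(1-\alpha)\gamma\leq\frac1s$ are in fact simultaneously solvable only for $s\leq\frac{1}{2(1-\alpha)}$ (compatibility requires $(2s-1)(1-\alpha)\leq\alpha$), which is strictly smaller than the claimed $1+\frac{\alpha}{1-\alpha}=\frac{1}{1-\alpha}$ and is even $<1$ when $\alpha<\frac12$, so as written your Case 2 range is empty for small $\alpha$. With the corrected term $\beta\epsilon^\alpha$ the natural choice $\epsilon=\beta^{-1}$ makes both contributions equal to $\beta^{1-\alpha}$ and the condition $1-\alpha\leq\frac1s$ gives exactly $s\leq 1+\frac{\alpha}{1-\alpha}$; this matches the paper, where the analogous term is $\beta\|H^{-1}AH-(H^{-1}AH)^*\|\lesssim\beta\epsilon^\alpha$. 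The same error recurs in your Case 4: the correct balance is $\epsilon^{-1}=\beta\epsilon^\alpha$, i.e.\ $\epsilon=\beta^{-1/(1+\alpha)}$ and growth rate $\beta^{1/(1+\alpha)}$, whence $s<1+\alpha$; your $\epsilon=\beta^{-2/(\alpha+1)}$ does not balance the terms you wrote and would not yield the stated index.

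Beyond this, your outline and the paper genuinely diverge in method: the paper does not mollify $a$ inside a scalar energy but performs the Colombini--Kinoshita change of variables $V=e^{-\rho(t)\beta^{1/s}}(\det H)^{-1}HW$, with $H(t)$ built from regularisations of the characteristic roots $\pm\sqrt{a}*\varphi_\epsilon$ (shifted by $\epsilon^\alpha$ in Case 4 to keep $\det H\gtrsim\epsilon^\alpha$), and proves monotonicity of $|W|^2$; and in Case 3 it uses the D'Ancona--Spagnolo quasi-symmetriser $Q^{(2)}_\epsilon=S+2\epsilon^2\,{\rm diag}(1,0)$ together with the Kinoshita--Spagnolo integral lemma bounding $\int_0^T(\partial_tQ^{(2)}_\epsilon V,V)\big((Q^{(2)}_\epsilon V,V)^{1-\frac1l}|V|^{\frac2l}\big)^{-1}dt$ by $c_T\|Q^{(2)}_\epsilon\|^{1/l}_{\mathcal C^l}$. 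Your Case 3 gestures at a Glaeser-type inequality but does not supply this (nontrivial, imported) lemma, and your Cases 3--4 still contain placeholder exponents, so the polynomial prefactors $(1+\beta^{l/\sigma})$ and $(1+\beta^{\alpha/(\alpha+1)})$ and the uniform ``$K=k\beta_0^{\cdots}$'' refinements are asserted rather than derived. The plan is salvageable, but the $\beta$-bookkeeping that you correctly identify as the crux is precisely where the argument currently fails.
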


\begin{proof}
First we reduce the problem \eqref{ODE} to a first order system. In order to do this we rewrite it in a standard way as a matrix-valued equation. Thus we define the column vectors
\[
V(t):=\begin{pmatrix}i\beta v(t)\\ \partial_t v(t)\end{pmatrix},\quad V_0:=\begin{pmatrix}i\beta v_0\\ v_1\end{pmatrix},
\]
and the matrix 
\begin{align*}
A(t):=\begin{pmatrix}
0 & 1\\
a(t) & 0
\end{pmatrix},
\end{align*}
that allow us to reformulate the second order system \eqref{ODE} as the first order system
\begin{align}\label{first}
\begin{cases}
V_t(t)=i\beta A(t)V(t),\\
V(0)=V_0.
\end{cases}
\end{align}
We will now treat each case separately.

\subsection{Case 1: $\mathbf{a\in\mathbf{Lip}([0,T])}$, $\mathbf{a(t)\geq a_0>0}$.}

This is the simplest case that can be treated by a classical argument.
We observe that the eigenvalues of our matrix $A(t)$ are given by $\pm \sqrt{a(t)}$. The symmetriser $S$ of $A$, i.e. the matrix such that 
\[
SA-A^*S=0,
\]
is given by
\[
S(t)=\begin{pmatrix}2 a(t) & 0\\ 0 & 2\end{pmatrix}.
\]
Thus we define the energy as 
\[
E(t):=\Big(S(t)V(t),V(t)\Big),
\] 
and we want to estimate its variations in time. A straightforward calculation yields the following inequality that will help us to get such estimate:
\begin{equation}\label{en1}
2 |V|^2 \min_{t\in[0,T]} \{a(t),1\}\leq E(t)\leq 2|V|^2\max_{t\in[0,T]}\{a(t),1\}. 
\end{equation}
In particular, in this case the continuity of $a(t)$ ensures the existence of two strictly positive constants $a_0$ and $a_1$ such that
\[
a_0=\min_{t\in[0,T]} a(t)\quad\text{and}\quad a_1=\max_{t\in[0,T]}a(t).
\]
Thus setting $c_0:=2 \min \{a_0,1\}$ and $c_1:=2\max\{a_1,1\}$, the inequality \eqref{en1} becomes
\begin{align}\label{en2}
c_0|V(t)|^2\leq E(t)\leq c_1 |V(t)|^2.
\end{align}
A straightforward calculation, together with \eqref{en2}, gives the following estimate:
\begin{align}\notag
E_t(t)&=\big( S_t(t)V(t),V(t)\big)+\big(S(t)V_t(t),V(t)\big)+\big(S(t)V(t),V_t(t)\big)=\\\notag
&=\big( S_t(t)V(t),V(t)\big) +i\beta\big( S(t)A(t)V(t),V(t)\big)-i\beta\big( S(t)V(t),A(t)V(t)\big)=\\\notag
&=\big( S_t(t)V(t),V(t)\big)+i\beta\Big( \big(S(t)A(t)-A^*(t)S(t)\big)V(t),V(t)\Big)=\\
&=\big( S_t(t)V(t),V(t)\big)\leq \|S_t(t)\| |V(t)|^2,\label{der}
\end{align}
thus setting $c':=c_0^{-1}\sup_{t\in [0,T]}\|S_t(t)\|$, we get from \eqref{der} using \eqref{en2} that
\begin{align}\label{gron}
E_t(t)\leq c' E(t).
\end{align}
Applying the Gronwall lemma to \eqref{gron}, we deduce that there exists a constant $c>0$ independent of $t\in[0,T]$ such that 
\begin{align}\label{gron2}
E(t)\leq c E(0).
\end{align}
Therefore, putting together \eqref{gron2} and \eqref{en2} we obtain
\begin{align}\notag
c_0 |V(t)|^2\leq E(t)\leq c E(0)\leq c c_1 |V(0)|^2.
\end{align} 
We can then rephrase this, asserting that there exists a constant $C>0$ independent of $t$ such that $|V(t)|^2\leq C |V(0)|^2$. Then we write this inequality going back to the definition of $V(t)$, yielding
\[
\beta^2|v(t)|^2+|\partial_t v(t)|^2\leq C\big(\beta^2 |v_0|^2 + |v_1|^2\big),
\]
as required.

\subsection{Case 2: $\mathbf{a\in\mathcal C^\alpha([0,T])}$, with  $\mathbf{ 0<\alpha<1,\,\,a(t)\geq a_0>0}$.}
Here we follow the method developed by Colombini and Kinoshita \cite{CK2002} for $n=1$ and subsequently extended \cite{GR2012} for any $n\in\mathbb N$. We look for solutions of the form 
\begin{align}\label{sol}
V(t)=e^{-\rho(t)\beta^{\frac{1}{s}}}(\det H(t))^{-1}H(t)W(t),
\end{align}
where
\begin{itemize}
\item $s\in\R$  depends on $\alpha$ as will be determined in the argument;
\item the function $\rho=\rho(t)\in\mathcal C^1([0,T])$ is real-valued and will be chosen later;
\item $W(t)$ is the energy;
\item $H(t)$ is the matrix defined by
\[
H(t):=\begin{pmatrix}1 & 1\\ \lambda^\epsilon_1(t) & \lambda^\epsilon_2(t)\end{pmatrix},
\]
where for all $\epsilon>0$, $\lambda^\epsilon_1(t)$ and $\lambda^\epsilon_2(t)$ are regularisations of the eigenvalues of the matrix $A(t)$ of the form
\begin{align*}
&\lambda^\epsilon_{1}(t):=(-\sqrt{a}*\varphi_\epsilon)(t),\\\notag
&\lambda^\epsilon_{2}(t):=(+\sqrt{a}*\varphi_\epsilon)(t),
\end{align*}
with $\{\varphi_\epsilon(t)\}_{\epsilon>0}$ being a family of cut-off functions defined starting from a non-negative function $\varphi\in\mathcal C^\infty_c(\R)$, with $\int_\R\varphi=1$, by setting $\varphi_\epsilon(t):=\frac{1}{\epsilon}\varphi\big(\frac{t}{\epsilon}\big)$. By construction, it follows that $\lambda^\epsilon_{1},\lambda^\epsilon_{2}\in\mathcal C^\infty([0,T])$.

\end{itemize}
Furthermore, we can easily check, using the H\"older regularity of $a(t)$ of order $\alpha$ and, therefore, of $\sqrt{a(t)}$ of the same order $\alpha$, the following inequalities: 
 \begin{equation}\label{strait}
\det H(t)=\lambda^\epsilon_2(t)-\lambda^\epsilon_1(t)\geq 2\sqrt{a(0)}\geq 2\sqrt{a_0},
\end{equation}
and for all $t\in[0,T]$ and $\epsilon>0$ there exist two constants $c_1,c_2>0$ such that
\begin{align}\notag
|\lambda^\epsilon_1(t)+\sqrt{a(t)}|\leq c_1\epsilon^\alpha,\\\label{difference}
|\lambda^\epsilon_2(t)-\sqrt{a(t)}|\leq c_2 \epsilon^\alpha,
\end{align}
uniformly in $t$ and $\epsilon$.
Now we substitute our suggested solution \eqref{sol} in \eqref{first} yielding
\begin{align*}
&-\rho'(t)\beta^\frac{1}{s}e^{-\rho(t)\beta^\frac{1}{s}}\frac{H(t)W(t)}{\det H(t)}+e^{-\rho(t)\beta^\frac{1}{s}}\frac{H_t(t)W(t)}{\det H(t)}+e^{-\rho(t)\beta^\frac{1}{s}}\frac{H(t)W_t(t)}{\det H(t)}+\\
&-e^{-\rho(t)\beta^\frac{1}{s}}(\det H)_t(t)\frac{H(t)W(t)}{\big(\det H(t)\big)^2}=i\beta A(t)e^{-\rho(t)\beta^\frac{1}{s}}\frac{H(t)W(t)}{\det H(t)}.
\end{align*}
Multiplying both sides of this equality  by $e^{\rho(t)\beta^{\frac{1}{s}}}\det H(t)H^{-1}(t)$ we get
\begin{align}\notag
W_t(t)&=\rho'(t)\beta^{\frac{1}{s}}W(t)-H^{-1}(t)H_t(t)W(t)+(\det H)_t(t)\big(\det H(t)\big)^{-1}W(t)+\\
&+i\beta H^{-1}(t)A(t)H(t)W(t).\label{W}
\end{align}
This leads to the estimate
\begin{align*}
\frac{d}{dt}|W(t)|^2&=\big(W_t(t),W(t)\big)+\big(W(t),W_t(t)\big)=2\Rep\big(W_t(t),W(t)\big)=\\
&=2\Big(\rho'(t)\beta^{\frac{1}{s}}|W(t)|^2-\Rep\big(H^{-1}(t)H_t(t)W(t),W(t)\big)+\\
&+\big(\det H(t)\big)^{-1}(\det H)_t(t)|W(t)|^2+\beta\Imp\big(H^{-1}(t)A(t)H(t)W(t),W(t)\big)\Big).
\end{align*}
We observe that
\begin{align*}
&2\Imp\big(H^{-1}AHW,W\big)=\big(H^{-1}AHW,W\big)-\overline{\big(H^{-1}AHW,W\big)}=\\
&=\big(H^{-1}AHW,W\big)-\big(W,H^{-1}AHW\big)=\big(H^{-1}AHW,W\big)-\big((H^{-1}AH)^*W,W\big)=\\
&=\Big(\big(H^{-1}AH-(H^{-1}AH)^*\big)W,W\Big)\leq\|H^{-1}AH-(H^{-1}AH)^*\|\|W\|^2.
\end{align*}
 Thus we obtain
 \begin{align}\notag
\frac{d}{dt}|W(t)|^2\leq \Big(2\rho'(t)\beta^{\frac{1}{s}}+2\|H^{-1}(t)H_t(t)\|+2\big|\big(\det H(t)\big)^{-1}(\det H)_t(t)\big|+\\\label{enW}
+\beta \|H^{-1}AH-(H^{-1}AH)^*\| \Big)|W(t)|^2.
 \end{align}
To proceed we need to estimate the following quantities:
\begin{enumerate}[I)]
\item $\|H^{-1}(t)H_t(t)\|$;
\item $\big|\big(\det H(t)\big)^{-1}(\det H)_t(t)\big|$;
\item $\|H^{-1}AH-(H^{-1}AH)^*\|$.
\end{enumerate}
In \cite{GR2012} and \cite{CK2002}, the authors determine estimates for similar functions in a more general setting, i.e. starting from an equation of arbitrary order $m$. In this particular case, we can proceed by straightforward calculations without relying on the mentioned works. 

We deal with these three terms as follows:
\begin{enumerate}[I)]
\item Since $H^{-1}(t)=\frac{1}{\lambda_2^\epsilon-\lambda_1^\epsilon}\begin{pmatrix}\lambda_2^\epsilon &-1\\-\lambda_1^\epsilon & 1
\end{pmatrix}$ and $H_t(t)=\begin{pmatrix}0 & 0\\\partial_t\lambda_1^\epsilon & \partial_t\lambda_2^\epsilon
\end{pmatrix}$, it follows that the entries of the matrix $H^{-1}H_t$ are given by the functions $\frac{\partial_t \lambda_j^\epsilon}{\lambda_2^\epsilon-\lambda_1^\epsilon}$. We have, for example for $\lambda_2$,
\begin{align}\notag
\partial_t\lambda_2^\epsilon(t)&=\sqrt a * \partial_t \varphi_\epsilon(t)=\frac{1}{\epsilon^2}\sqrt a *\varphi'\Big(\frac{t}{\epsilon}\Big)=\frac{1}{\epsilon}\int\sqrt{a(t-\rho\epsilon)}\varphi'(\rho)d\rho=\\\label{derL}
&=\frac{1}{\epsilon}\int\big(\sqrt{a(t-\rho\epsilon)}-\sqrt{a(t)}\big)\varphi'(\rho)d\rho+\frac{1}{\epsilon}\sqrt{a(t)}\int\varphi'(\rho)d\rho\leq k\epsilon^{\alpha -1},
\end{align}
where we are using the H\"older continuity of $\sqrt a$ for the first term and the fact that the second term is zero, since $\int \varphi'=0 $.
Combining the inequalities \eqref{strait} and \eqref{derL}, we get for a suitable positive constant $k_1$ that
\[
\|H^{-1}(t)H_t(t)\|\leq k_1\epsilon^{\alpha-1}.
\]

\item First we can estimate
\begin{align*}
\big|\big(\det H(t)\big)^{-1}(\det H)_t(t)\big|= \frac{\partial_t\lambda_2^\epsilon-\partial_t\lambda_1^\epsilon}{\lambda_2^\epsilon-\lambda_1^\epsilon}=\frac{2\partial_t\lambda_2^\epsilon}{\lambda_2^\epsilon-\lambda_1^\epsilon}\leq\frac{2k\epsilon^{\alpha-1}}{2\sqrt{a_0}},
\end{align*}
therefore,
\[
\big|\big(\det H(t)\big)^{-1}(\det H)_t(t)\big|\leq k_2 \epsilon^{\alpha-1},
\]
for a constant $k_2>0$.
\item Also in this case, we write explicitly the matrix we are interested in, that is
\begin{equation}\notag
H^{-1}AH-\big(H^{-1}AH\big)^*=\begin{pmatrix}0&\frac{-2a(t)+(\lambda_1^\epsilon)^2+(\lambda_2^\epsilon)^2}{\lambda_1^\epsilon-\lambda_2^\epsilon}\\ \frac{2a(t)-\big((\lambda_1^\epsilon)^2+(\lambda_2^\epsilon)^2\big)}{\lambda_1^\epsilon-\lambda_2^\epsilon}&0\end{pmatrix}.
\end{equation}
Observing that, by definition, $(\lambda_1^\epsilon)^2=(\lambda_2^\epsilon)^2$, and recalling inequality \eqref{strait}, to get the desired norm estimate, it is enough to consider the function $|a(t)-(\lambda_2^\epsilon)^2|$. A straightforward calculation, using inequality \eqref{difference}, shows that
\begin{align*}
|a(t)-(\lambda_2^\epsilon)^2|&=|\big(\sqrt{a(t)}-\lambda_2^\epsilon\big)\big(\sqrt{a(t)}+\lambda_2^\epsilon\big)|\leq\\
&\leq c_2\epsilon^\alpha\Big(\sqrt{a(t)}+\int\sqrt{a(t-s)}\varphi_\epsilon(s)ds\Big)=\\&=c_2\epsilon^\alpha+\int \big(\sqrt{a(t)} +\sqrt{a(t-s\epsilon)}\big)\varphi(s)ds\leq 2c_2\|\sqrt a\|_{L^\infty} \epsilon^\alpha.
\end{align*}
It follows that
\[
\|H^{-1}AH-(H^{-1}AH)^*\|\leq k_3 \epsilon^\alpha.
\]
\end{enumerate}
Going back to \eqref{enW}, combining it with estimates I), II) and III), we get an estimate for the derivative of the energy, that is
\begin{align}\label{enW2}
\frac{d}{dt}|W(t)|^2\leq \Big(2\rho'(t)\beta^{\frac{1}{s}}+2k_1\epsilon^{\alpha-1}+2k_2 \epsilon^{\alpha-1}+ k_3 \beta \epsilon^\alpha \Big)|W(t)|^2.
\end{align}
At this point we choose $\epsilon=\frac{1}{\beta}$, observing that we can always consider $\beta$ large enough, say $\beta>1$, in order to have a small $\epsilon\in(0,1]$. 
Indeed, for $\beta\leq\beta_0$ for some fixed $\beta_0>0$, a modification of the argument below gives estimate \eqref{EQ:case2} with constants depending only on $\beta_0$ and $T$. So we may assume that $\beta>\beta_0$ for $\beta_0$ to be specified.
We define also $\rho(t):=\rho(0)-Kt$ for some $K>0$ to be specified. 
Substituting this in \eqref{enW2} we get for a suitable constant $k>0$ that 
\begin{align*}
\frac{d}{dt}|W(t)|^2\leq \Big(2\rho'(t)\beta^{\frac{1}{s}}+2k\beta^{1-\alpha}\Big)|W(t)|^2= \big(-2K+2k\beta^{1-\alpha-\frac1s} \big)\beta^{\frac{1}{s}} |W(t)|^2.
\end{align*}
If we have
\[
\frac{1}{s}>1-\alpha \quad \iff\quad s<1+\frac{\alpha}{1-\alpha},
\]
and then also 
\begin{equation}\label{EQ:Kchoice}
K:= k\beta_0^{1-\alpha-\frac1s}\geq k\beta^{1-\alpha-\frac1s},
\end{equation} 
then for all $t\in[0,T]$ we have
\[
\frac{d}{dt}|W(t)|^2\leq 0.
\]
This monotonicity of the energy $W(t)$ yields the following boundedness for the solution vector $V(t)$:
\begin{align}\notag
|V(t)|&=e^{-\rho(t)\beta^{\frac{1}{s}}}\big(\det H(t)\big)^{-1}\|H(t)\||W(t)|\leq\\
&\leq e^{-\rho(t)\beta^{\frac{1}{s}}}\big(\det H(t)\big)^{-1}\|H(t)\||W(0)|=e^{K t\beta^{\frac{1}{s}}}\frac{\det H(0)}{\det H(t)}\frac{\|H(t)\|}{\|H(0)\|}|V(0)|.\label{V}
\end{align}
Note that, according to property \eqref{strait}, the function $\big(\det H\big)^{-1}(t)$ is bounded. Furthermore, the  behaviour of the convolution and the definition of $H(t)$ guarantee the existence of suitable constants $c,c'>0$ such that $\|H(t)\|\leq c$ and $\|H^{-1}(0)\|\leq c'$. Therefore, there exists a constant $C>0$ such that
\begin{align*}
|V(t)|\leq C e^{Kt\beta^{\frac{1}{s}}}|V(0)|,
\end{align*}
that means, by definition of $V(t)$, that
\begin{align*}
\beta^2|v(t)|^2+|v_t(t)|\leq C e^{Kt\beta^{\frac{1}{s}}}\big(\beta^2|v_0|^2+|v_1|^2\big),
\end{align*}
proving the statement of Case 2.

\subsection{Case 3: $\mathbf{a\in\mathcal C^l([0,T])}$, with $\mathbf{l\geq 2,\; a(t)\geq 0}$}
In this case we extend the technique developed for Case 1. First we perturb the symmetriser of the matrix $A(t)$. This is done considering the so-called quasi-symmetriser of $A(t)$, the idea introduced for such problems by D'Ancona and Spagnolo in \cite{DS1998}.

Consider the quasi-symmetriser of $A(t)$, that is, a family of coercive, Hermitian matrices of the form
\begin{align*}
Q_\epsilon^{(2)}(t):=S(t)+2\epsilon^2\begin{pmatrix}1 & 0\\ 0 & 0\end{pmatrix}=\begin{pmatrix}2 a(t) & 0\\ 0 & 2\end{pmatrix}+2\epsilon^2\begin{pmatrix}1 & 0\\ 0 & 0\end{pmatrix},
\end{align*}
for all $\epsilon\in(0,1]$, and such that $\big(Q_\epsilon^{(2)}A-A^{*} Q_\epsilon^{(2)}\big)$ goes to zero as $\epsilon$ goes to zero. The associated perturbed energy is given by 
\begin{align*}
E_\epsilon(t):=\Big(Q_\epsilon^{(2)}V(t),V(t)\Big).
\end{align*}
We proceed estimating the energy, calculating its derivatives in time, so that
\begin{align}\notag
&\frac{d}{dt}E_\epsilon(t)= \Big(\frac{d}{dt}Q_\epsilon^{(2)}(t)V(t),V(t)\Big)+(Q_\epsilon^{(2)}V_t(t),V(t))+(Q_\epsilon^{(2)}V(t),V_t(t))=\\ \label{energyest}
&=\Big(\frac{d}{dt}Q_\epsilon^{(2)}(t)V(t),V(t)\Big)+i\beta\Big(\big(Q_\epsilon^{(2)}A(t)-A^*(t)Q_\epsilon^{(2)}\big)V(t),V(t)\Big).
\end{align}
To estimate the second term in the right hand side, we set 
\[
V(t)=\begin{pmatrix}
i\beta v(t)\\
\partial_t v
\end{pmatrix}=:\begin{pmatrix}
v_1\\
v_2
\end{pmatrix}.
\]
Algebraic calculations give
\begin{align}\notag
Q_\epsilon^{(2)}(t)A(t)-A^*(t)Q_\epsilon^{(2)}(t)=2\epsilon^2 
\begin{pmatrix}
0 & 1\\
-1 & 0
\end{pmatrix},
\end{align}
therefore
\begin{align}\notag
&i\Big(\big(Q_\epsilon^{(2)}(t)A(t)-A^*(t)Q_\epsilon^{(2)}(t)\big)V(t),V(t)\Big)\leq 2\epsilon^2\int 2\Imp(v_2\overline{v_1})dt \leq 2\epsilon \int 2|\epsilon v_1||v_2|dt\leq\\\notag
&\leq 2\epsilon\int (\epsilon^2|v_1|^2+|v_2|^2)dt\leq 2\epsilon\int\big(\epsilon^2+a(t)\big)|v_1|^2+|v_2|^2dt= 2\epsilon \big(Q_\epsilon^{(2)}(t)V(t),V(t)\big).
\end{align}
Using this estimate in \eqref{energyest}, we get
\begin{align}\notag
&\frac{d}{dt}E_\epsilon(t)= \Big(\frac{d}{dt}Q_\epsilon^{(2)}(t)V(t),V(t)\Big)+i\beta\Big(\big(Q_\epsilon^{(2)}A(t)-A^*(t)Q_\epsilon^{(2)}\big)V(t),V(t)\Big)\leq\\ \notag
&\leq \Big(\frac{d}{dt}Q_\epsilon^{(2)}(t)V(t),V(t)\Big) +2\beta\epsilon E_\epsilon(t)\\
&= \Bigg[\frac{\Big(\frac{d}{dt}Q_\epsilon^{(2)}(t)V(t),V(t)\Big)}{\big(Q_\epsilon^{(2)}(t)V(t),V(t)\big)}+ 2 \beta \epsilon\Bigg]E_\epsilon(t).
\end{align}
In order to apply the Gronwall lemma, we first estimate the integral
\begin{align}\label{lemma2}
\int_0^T\frac{\big(\frac{d}{dt}Q_\epsilon^{(2)}(t)V(t),V(t)\big)}{\big(Q_\epsilon^{(2)}(t)V(t),V(t)\big)}dt.
\end{align}
Let us recall that from the definition of the quasi-symmetriser, it follows that
\begin{align}\label{qs}
\Big( Q_\epsilon^{(2)}V,V\Big)=2\int\big(\big(a(t)+\epsilon^2\big)\beta^2|v|^2+|\partial_t v|^2\big)dt.
\end{align} 
Thus, setting $c_1:=\max\big(1, 2(\|a\|_{L^\infty}+\epsilon^2)\big)$, we obtain a bound from above for \eqref{qs}, that is
\[
\Big( Q_\epsilon^{(2)}V,V\Big)\leq c_1 |V|^2.
\]
Observing that $\epsilon^2c_1^{-1}\leq 1$ and $\epsilon^2c_1^{-1}\leq c_1$ for small enough $\epsilon$, we can also deduce an inequality from below of the form
\[
\epsilon^2c_1^{-1}|V|^2\leq \Big( Q_\epsilon^{(2)}V,V\Big).
\]
Hence, there exists a constant $c_1\geq 1$ such that for $t\in[0,T]$ we have
\begin{align}\label{property1}
c_1^{-1}\epsilon^2|V(t)|^2\leq \big(Q_\epsilon^{(2)}(t)V(t),V(t)\big)\leq c_1 |V(t)|^2.
\end{align}
The lower bound, together with \cite[Lemma 2]{GR2013} (see \cite[Lemma 2]{KS2006} for a detailed proof), allows us to estimate the integral \eqref{lemma2} as follows
\begin{align}\notag
&\int_0^T\frac{\big(\frac{d}{dt}Q_\epsilon^{(2)}(t)V(t),V(t)\big)}{\big(Q_\epsilon^{(2)}(t)V(t),V(t)\big)}dt \leq
\int_0^T\frac{\big(\frac{d}{dt}Q_\epsilon^{(2)}(t)V(t),V(t)\big)}{\big(Q_\epsilon^{(2)}(t)V(t),V(t)\big)^{1-\frac{1}{l}}(Q_\epsilon^{(2)}(t)V(t),V(t)\big)^\frac{1}{l}}dt\leq\\ \notag
&\leq c_1^\frac{1}{l}\epsilon^{-\frac{2}{l}} \int_0^T\frac{\big(\frac{d}{dt}Q_\epsilon^{(2)}(t)V(t),V(t)\big)}{\big(Q_\epsilon^{(2)}(t)V(t),V(t)\big)^{1-\frac{1}{l}}|V(t)|^\frac{2}{l}}dt \leq c_1^\frac{1}{l}\epsilon^{-\frac{2}{l}} c_T\|Q_\epsilon^{(2)}\|^{\frac{1}{l}}_{\mathcal C^l([0,T])}\leq c_3\epsilon^{-\frac{2}{l}}.
\end{align}
Thus, by the Gronwall lemma and the estimates for the quasi-symmetriser just derived, we obtain
\[
E_\epsilon(t)\leq E_\epsilon(0)e^{c_3\epsilon^{-\frac{2}{l}}+2\beta\epsilon T}.
\]
Combining the latter inequality with \eqref{property1} we obtain
\[
c_1^{-1}\epsilon^2 |V(t)|^2\leq E_\epsilon(t)\leq E_\epsilon(0)e^{c_T(\epsilon^{-\frac{2}{l}}+\beta\epsilon)} \leq c_1|V(0)|^2 e^{c_T(\epsilon^{-\frac{2}{l}}+\beta\epsilon)}.
\]
We choose $\epsilon$ such that $\epsilon^{-\frac{2}{l}}=\beta\epsilon$, thus $\epsilon=\beta^{-\frac{l}{2+l}}$ and $\epsilon\beta=\beta^{\frac{2}{2+l}}$. 
We can assume $\beta$ is large enough with a remark for small $\beta$ similar to Case 2.
Setting $\sigma=1+\frac{l}{2}$, for a suitable constant $K\in\C$ it follows that
\[
|V(t)|^2\leq C \beta^{\frac{l}{\sigma}}e^{K\beta^{\frac{1}{\sigma}}}|V(0)|^2.
\]
This means that
\[
\beta^2|v(t)|^2+|v'(t)|^2\leq C \beta^\frac{l}{\sigma}e^{K\beta^{\frac{1}{\sigma}}}\big(\beta^2 |v_0|^2+|v_1|^2\big),
\]
as required.

\subsection{\textbf{Case 4: $\mathbf{a\in\mathcal C^\alpha([0,T])}$, with $\mathbf{0<\alpha<2}$, $\mathbf{a(t)\geq0}$.}}

In this last case we extend the proof of Case 2. However, under these assumptions the roots of the matrix $A(t)$, that are, $\pm\sqrt{a(t)}$ might coincide, and hence they are H\"older of order $\frac{\alpha}{2}$ instead of $\alpha$. In order to adapt this proof to the one for Case 2 we will assume without loss of generality that $a\in\mathcal C^{2\alpha}([0,T])$ with $0<\alpha<1$, so that  $\sqrt{a}\in\mathcal C^\alpha([0,T])$. 

Following the argument developed for Case 2, we look again for solutions of the form
\begin{align*}
V(t)=e^{-\rho(t)\beta^{\frac{1}{s}}}\big(\det  H(t)\big)^{-1}H(t)W(t),
\end{align*}
with the real-valued function $\rho(t)$, the exponent $s$ and the energy $W(t)$ to be chosen later, while $H(t)$ is the matrix given by
\begin{align*} 
H(t)=\begin{pmatrix}
1 & 1\\
\lambda^\epsilon_{1,\alpha}(t) & \lambda^\epsilon_{2,\alpha}(t)
\end{pmatrix},
\end{align*}
where the regularised eigenvalues of $A(t)$, $\lambda^\epsilon_{1,\alpha}(t)$ and $\lambda^\epsilon_{2,\alpha}(t)$ differ from the ones defined in the previous case in the following way
\begin{align*}
&\lambda^\epsilon_{1,\alpha}(t):=(-\sqrt{a}*\varphi_\epsilon)(t)+\epsilon^\alpha,\\
&\lambda^\epsilon_{2,\alpha}(t):=(+\sqrt{a}*\varphi_\epsilon)(t)+2\epsilon^\alpha.
\end{align*}
Arguing as in Case 2, we can easily see that the smooth functions $\lambda^\epsilon_{1}(t)$ and $\lambda^\epsilon_{2}(t)$ satisfy uniformly in $t$ and $\epsilon$ the following inequalities
\begin{itemize}
\item $\det  H(t)=\lambda^\epsilon_{2,\alpha}(t)-\lambda^\epsilon_{1,\alpha}(t)\geq c_1\epsilon^\alpha$;
\item $|\lambda^\epsilon_{1,\alpha}(t)+\sqrt {a(t)}|\leq c_2 \epsilon^\alpha$;
\item $|\lambda^\epsilon_{2,\alpha}(t)-\sqrt{a(t)}|\leq c_3 \epsilon^\alpha$.
\end{itemize}
We now look for the energy estimates. In order to do this, recalling the calculations done before \eqref{W} and \eqref{enW}, we obtain
\begin{align}\notag
&\frac{d}{dt}\big|W(t)\big|^2=2\Rep\big(W_t(t),W(t)\big)\leq \Big(2\rho'(t)\beta^{\frac{1}{s}}+2\|H^{-1}(t)H_t(t)\|+\\&+2\big|\big(\det H(t)\big)^{-1}\det H_t(t)\big|+\beta \|H^{-1}AH-(H^{-1}AH)^*\| \Big)|W(t)|^2.\label{W4}
\end{align}
The same arguments as in Case 2 allow us to get the following bounds
\begin{enumerate}[I)]
\item $\|H^{-1}(t)H_t(t)\|\leq k_1\epsilon^{-1}$;
\item $\big|\big(\det H(t)\big)^{-1}\det H_t(t)\big|\leq k_2 \epsilon^{-1}$;
\item $\|H^{-1}AH-(H^{-1}AH)^*\|\leq k_3 \epsilon^\alpha$.
\end{enumerate}
Combining \eqref{W4} with I), II) and III) we obtain 
\begin{align*}
\frac{d}{dt}|W(t)|^2\leq \Big(2\rho'(t)\beta^{\frac{1}{s}}+2k_1\epsilon^{-1}+2k_2 \epsilon^{-1}+ k_3 \beta \epsilon^\alpha \Big)|W(t)|^2.
\end{align*}
We choose $\epsilon^{-1}=\beta\epsilon^{\alpha}$ which yields $\epsilon=\beta^{-\frac{1}{\alpha+1}}$. Thus, setting $\gamma:=\frac{1}{\alpha+1}$, we obtain for a constant $c>0$ the estimate
\begin{align*}
\frac{d}{dt}|W(t)|^2\leq \Big(2\rho'(t)\beta^{\frac{1}{s}}+2c \beta^\gamma \Big)|W(t)|^2.
\end{align*}
We take $\rho(t):=\rho(0)-Kt$ with $K>0$ to be chosen later. 
Considering 
\begin{align*}
\frac{1}{s}>\gamma\quad\iff\quad s< 1 +\alpha,
\end{align*}
we get
\[
\frac{d}{dt}|W(t)|^2\leq (-2K+2c\beta^{\gamma-\frac1s}) \beta^{\frac{1}{s}} |W(t)|^2\leq 0,
\]
provided that $\beta$ is large enough.
Similarly to Case 2, we then get
\begin{align}\label{V2}
|V(t)|\leq e^{Kt\beta^{\frac{1}{s}}}\big(\det H(t)\big)^{-1}\det H(0)\|H(t)\|\big(\|H(0)\|\big)^{-1}|V(0)|.
\end{align}
Since 
\[
\big(\det H(t)\big)^{-1}\det H(0)\|H(t)\|\big(\|H(0)\|\big)^{-1}\leq c\epsilon^{-\alpha}=c\beta^{\frac{\alpha}{\alpha+1}},
\]
the inequality \eqref{V2} becomes 
\begin{align*}
|V(t)|\leq c \beta^{\frac{\alpha}{\alpha+1}}e^{Kt\beta^{\frac{1}{s}}}|V(0)|,
\end{align*}
which means 
\begin{align*}
\beta^2|v(t)|^2+|v'(t)|^2\leq c\beta^{\frac{\alpha}{\alpha+1}}e^{Kt\beta^{\frac{1}{s}}}\big(\beta^2|v_0|^2+|v_1|^2\big).
\end{align*}
Combining this with a remark for small $\beta$ similar to Case 2 yields the result.
Thus Proposition \ref{lemma} is proved.
\end{proof}

\section{Proof of Theorem \ref{THM:main}}
\label{SEC:proof}

In this section we combine the things from Section \ref{SEC:prelim} and Section \ref{SEC:ODE} to prove Theorem \ref{THM:main}.
However, we will need one more ingredient, the Fourier transform on $G$, that we now briefly describe. 

Let $f\in L^1(G)$ and let $\pi\in\widehat G$. 
By a usual abuse of notation we will identify irreducible unitary representations with their equivalence classes.
The group Fourier transform of $f$ at $\pi$ is defined by
\[
\mathcal F_G f(\pi)\equiv\widehat f(\pi)\equiv\pi(f):=\int_G f(x)\pi(x)^*dx,
\] 
with integration against the biinvariant Haar measure on $G$.
This gives a linear mapping $\widehat f(\pi):\mathcal H_\pi\rightarrow\mathcal H_\pi$ that can be represented by an infinite matrix  once we choose a basis for the Hilbert space $\mathcal H_\pi$.
Consequently, we can write
\[
\mathcal F_G \big(\mathcal R f\big)(\pi)=\pi({\mathcal R})\widehat f(\pi).
\]
By Kirillov's orbit method (see e.g. \cite{CG90}), one can explicitly construct the Plancherel measure $\mu$ on the dual $\widehat G$. Therefore we can have the Fourier inversion formula. In addition,  the operator $\pi(f)=\widehat f (\pi)$ is Hilbert-Schmidt:
\[
\|\pi(f)\|^2_{\HS}={\rm Tr}\big(\pi(f)\pi(f)^*\big)<\infty,
\]
and the function $\widehat G\ni\pi \mapsto \|\pi(f)\|^2_{\HS}$ in integrable with respect to $\mu$. Furthermore, the Plancherel formula holds:
\begin{align}\label{plancherel}
\int_G|f(x)|^2dx=\int_{\widehat G}\|\pi(f)\|^2_{\HS}d\mu(\pi),
\end{align}
see e.g. \cite{CG90} or \cite{FR2016}.

\begin{proof}[Proof of Theorem \ref{THM:main}]
Our aim is to reduce the Cauchy problem \eqref{CP} to a form allowing us to apply Proposition \ref{lemma}. In order to do this, we take the group Fourier transform of \eqref{CP} with respect to $x\in G$ for all $\pi \in\widehat G $, that is,
\begin{align}\label{fourier}
\partial_t^2 \widehat u (t,\pi)+a(t)\pi(\mathcal R)\widehat u (t,\pi)=0.
\end{align}
Keeping in mind the form \eqref{EQ:R-spec} of the infinitesimal representation $\pi(\mathcal R)$ the equation \eqref{fourier} can be seen componentwise as an infinite system of equations of the form
\begin{align}\label{single}
\partial_t^2 \widehat u (t,\pi)_{m,k}+a(t)\pi_m^2\widehat u (t,\pi)_{m,k}=0,
\end{align}
where we are considering any $\pi\in\widehat G$, and any $m,k\in\N$.
The key point of the following argument is to decouple the system given by the matrix equation \eqref{fourier}. In order to do this, we fix an arbitrary representation $\pi$, and a general entry $(m,k)$ and we treat each equation given by \eqref{single} individually. Note that eventually $\widehat u(t,\pi)_{m,k}$ is a function only of $t$. Formally, recalling the notation used in Proposition \ref{lemma}, we write
\[
v(t):=\widehat u(t,\pi)_{m,k},\quad \beta^2:=\pi_m^2,
\]
and
\[
v_0:=\widehat u_0 (\pi)_{m,k},\quad v_1:=\widehat u_1(\pi)_{m,k}.
\]
Therefore, equation \eqref{single} becomes
\[
v''(t)+a(t)\beta^2 v(t)=0.
\]
We proceed discussing implications of Proposition \ref{lemma} separately in each case.

\medskip
\textbf{Case 1: $\mathbf{a\in\mathbf{Lip}([0,T])}$, $\mathbf{a(t)\geq a_0>0}$.}

Applying Proposition \ref{lemma}, we get that there exists a positive constant $C>0$  such that 
\[
\beta^2|v(t)|^2+|v'(t)|^2\leq C(\beta^2|v_0|^2+|v_1|^2),
\]
which is equivalent to
\begin{align}\label{single inequality}
|\pi_m\widehat u(t,\pi)_{m,k}|^2+|\widehat u'(t,\pi)_{m,k}|^2\leq C \big(|\pi_m\widehat u_0(\pi)_{m,k}|^2+|\widehat u_1(\pi)_{m,k}|^2\big).
\end{align}
This holds uniformly in $\pi\in\widehat G$ and $m, k\in\N$. We multiply the inequality \eqref{single inequality} by $\pi_m^{4s/\nu}$ yielding
\begin{align}\label{general single inequality}
|\pi_m^{1+\frac{2s}{\nu}}\widehat u(t,\pi)_{m,k}|^2+|\pi_m^{\frac{2s}{\nu}}\widehat u'(t,\pi)_{m,k}|^2\leq C \big(|\pi_m^{1+\frac{2s}{\nu}}\widehat u_0(\pi)_{m,k}|^2+|\pi_m^{\frac{2s}{\nu}}\widehat u_1(\pi)_{m,k}|^2\big).
\end{align}
Thus, recalling that for any Hilbert-Schmidt operator $A$ we have 
$$\|A\|^2_{\HS}=\sum_{m,k}|(A\varphi_m,\varphi_k)|^2$$ for any orthonormal basis $\{\varphi_1,\varphi_2,\dots\}$, we can consider the infinite sum over $m,k$ of the inequalities provided by \eqref{general single inequality}, to get
\begin{align}\label{HS inequality}
\|\pi(\mathcal R)^{\frac{1}{2}+\frac{s}{\nu}}\widehat u (t,\pi)\|_{\HS}^2+\|\pi(\mathcal R)^{\frac{s}{\nu}}&\partial_t \widehat u(t,\pi)\|_{\HS}^2\leq\\\notag
&\leq C\big(\|\pi(\mathcal R)^{\frac{1}{2}+\frac{s}{\nu}}\widehat u_0(\pi)\|_{\HS}^2+\|\pi(\mathcal R)^{\frac{s}{\nu}}\widehat u_1(\pi)\|_{\HS}^2\big).
\end{align}
We can now integrate both sides of \eqref{HS inequality} against the Plancherel measure $\mu$ on $\widehat G$, so that the Plancherel identity yields estimate \eqref{inequality case 1}.

\medskip
\textbf{Case 2: $\mathbf{a\in\mathcal C^\alpha([0,T])}$, with } $\mathbf{ 0<\alpha<1,\,\,a(t)\geq a_0>0}$.

The application of Proposition \ref{lemma} implies the existence of two positive constants $C,K>0$ such that for all $m,k\in\N$ and for every representation $\pi\in\widehat G$ we have
\begin{align}\label{lemma case 2}
|\pi_m\widehat u (t,\pi)_{m,k}|^2+|\widehat u' (t,\pi)_{m,k}|^2\leq Ce^{Kt\pi_m^{\frac{1}{s}}}(|\pi_m\widehat u_0 (\pi)_{m,k}|^2+|\widehat u_1 (\pi)_{m,k}|^2),
\end{align} 
where 
\[
s < 1+\frac{\alpha}{1-\alpha}.
\]
If the Cauchy data $(u_0, u_1)$ are in $\mathcal G^s_\mathcal R(G)\times \mathcal G^s_\mathcal R(G)$ then there exist two positive constants $A_0$ and $A_1$ such that
\begin{equation*}
\|e^{A_0\mathcal R^{\frac{1}{2s}}}u_0\|_{L^2}<\infty\quad\text{and}\quad\|e^{A_1\mathcal R^{\frac{1}{2s}}}u_1\|_{L^2}<\infty.
\end{equation*}
We note that we can restrict to consider $\pi_m$ large enough since the cut-off to bounded $\pi_m$ produces functions in any Gevrey spaces. Indeed, if a cut-off $\chi:\R\to\C$ has a compact support, then by the same energy estimate, since $\chi(\mathcal R)$ and $\mathcal R$ commute, the problem is reduced to the solution $\chi(\mathcal R) u(t,x)$ to the Cauchy problem
\begin{align}\label{CPc}
\begin{cases}
\partial_t^2 (\chi(\mathcal R) u(t,x))+a(t)\mathcal R (\chi(\mathcal R)u(t,x))=0,\quad(t,x)\in[0,T]\times G,\\
\chi(\mathcal R)u(0,x)=\chi(\mathcal R)u_0(x),\quad x\in G,\\
\partial_t(\chi(\mathcal R) u(0,x))=\chi(\mathcal R)u_1(x),\quad x\in G,
\end{cases}
\end{align}
with data $\chi(\mathcal R)u_0$ and $\chi(\mathcal R)u_1$, so it
is in any Gevrey class.

Take now $A=\min\{A_0,A_1\}$, then we can always assume $K$ in Case 2 of Proposition \ref{lemma} is small enough, so that we have some $B>0$ such that $KT=A-B$. Therefore, we can rewrite inequality \eqref{lemma case 2} as 
\begin{align}\label{lemma case 3}
e^{B\pi_m^{\frac{1}{2s}}}\big(|\pi_m\widehat u (t,\pi)_{m,k}|^2+|\widehat u' (t,\pi)_{m,k}|^2\big)\leq Ce^{A\pi_m^{\frac{1}{s}}}(|\pi_m\widehat u_0 (t)_{m,k}|^2+|\widehat u_1 (t)_{m,k}|^2).
\end{align}
Summing over $m,k$, integrating against the Plancherel measure of $\widehat G$ and applying the Plancherel identity, inequality \eqref{lemma case 3} becomes
\begin{align}\notag
\|e^{B\mathcal R^{\frac{1}{2s}}}u\|_{L^2(G)}&+\|e^{B\mathcal R^{\frac{1}{2s}}}\partial_tu\|_{L^2(G)}\leq
\|e^{B\mathcal R^{\frac{1}{2s}}}\mathcal R^{\frac{1}{2}}u\|_{L^2(G)}+\|e^{B\mathcal R^{\frac{1}{2s}}}\partial_tu\|_{L^2(G)}\leq\\\label{case 2 conclusion}
&\leq \|e^{B\mathcal R^{\frac{1}{2s}}}\mathcal R^{\frac{1}{2}}u_0\|_{L^2(G)}+\|e^{B\mathcal R^{\frac{1}{2s}}}u_1\|_{L^2(G)}.
\end{align}
If a function $f$ belongs to $\mathcal G^s_{\mathcal R}(G)$, then also $\mathcal R^\frac{1}{2}f$ is in $\mathcal G_\mathcal R^s(G)$. Therefore, from \eqref{case 2 conclusion} we get the desired well-posedness result.

\medskip
$\mathbf{Case\, 3: a\in\mathcal C^l([0,T]),\, \,with\,\, l\geq 2,\, \,a(t)\geq 0.}$

Similarly to the previous cases, the application of Proposition \ref{lemma} yields the existence of two positive constants $C,K>0$ such that
\begin{align}\notag
|\pi_m \widehat{u}(t,\pi)_{m,k}(t)|^2+|\widehat{u}'(t,\pi)_{m,k}|^2&\leq C\pi_m^{\frac{l}{\sigma}+2}e^{KT\pi_m^\frac{1}{\sigma}}| \widehat{u}_0(\pi)_{m,k}|^2+C\pi_m^\frac{l}{\sigma}e^{KT\pi_m^\frac{1}{\sigma}}|\widehat{u}_1(\pi)_{m,k}|^2\leq\\\notag
&\leq Ce^{K'\pi_m^\frac{1}{s}}| \widehat{u}_0(\pi)_{m.k}|^2+Ce^{K'\pi_m^\frac{1}{s}}|\widehat{u}_1(\pi)_{m,k}|^2,
\end{align}
with $1\leq s<\sigma=1+\frac{l}{2}$, for some $K'>0$ small enough. Proceeding as in Case 2, we obtain the desired inequality.

\medskip
\textbf{Case 4: $\mathbf{a\in\mathcal C^\alpha([0,T])}$, with $\mathbf{0<\alpha<2}$, $\mathbf{a(t)\geq0}$.}

In this last case, applying Proposition \ref{lemma} we have that there exist two positive constants $C,K>0$ such that 
\[
\pi_m^2|\widehat{u}(t.\pi)_{m.k}|^2+|\widehat{u}'(t,\pi)_{m,k}|^2\leq C\pi_m^\frac{1}{\alpha+1}e^{KT\pi_m^\frac{1}{s}}(\pi_m^2|\widehat{u}_0(\pi)_{m,k}|^2+|\widehat{u}_1(\pi)_{m,k}|^2),
\]
with $1\leq s< \alpha +1$. Arguing as above, the result follows.
\end{proof}


\begin{thebibliography}{9}

\bibitem{Beals-Rockland}
R.~Beals.
\newblock Op\'erateurs invariants hypoelliptiques sur un groupe de {L}ie
  nilpotent.
\newblock {\em S\'eminaire {G}oulaouic-{S}chwartz 1976/1977: \'Equations aux
  d\'eriv\'ees partielles et analyse fonctionnelle}, {E}xp. {N}o. 19:8pp, 1977.
  
  \bibitem{Bony-Shapira:analytic-IM-1972}
J.-M. Bony, P.~Schapira.
\newblock Existence et prolongement des solutions holomorphes des {\'e}quations
  aux d{\'e}riv{\'e}es partielles.
\newblock {\em Invent. Math.}, 17:95--105, 1972.

\bibitem{Bronshtein:TMMO-1980}
M.~D. Bron{\v{s}}te{\u\i}n.
\newblock The {C}auchy problem for hyperbolic operators with characteristics of
  variable multiplicity.
\newblock {\em Trudy Moskov. Mat. Obshch.}, 41:83--99, 1980.

\bibitem{CHR08a}
M. Cicognani, F. Hirosawa, M. Reissig.
{The Log-effect for p-evolution type models.} 
{\em J. Math. Soc. Japan}, 60:819--863, 2008.

\bibitem{CHR08b}
M. Cicognani, F. Hirosawa, M. Reissig. 
{Loss of regularity for p-evolution type models.} 
{\em J. Math. Anal. Appl.}, 347:35--58, 2008.

\bibitem{CC10}
M. Cicognani, F. Colombini.
{The Cauchy problem for p-evolution equations},
{\em Trans. Amer. Math. Soc.}, 362:4853--4869, 2010.

\bibitem{CDG1979} 
F. Colombini, E. De Giorgi, S. Spagnolo.
{Sur les \'equations hyperboliques avec des coefficients qui ne d\'ependent que du temps},
 {\em Ann. Scuola Norm. Sup. Pisa Cl. Sci.}, 6:511--559, 1979.
 
\bibitem{Colombini-Jannelli-Spagnolo:Annals-low-reg}
F.~Colombini, E.~Jannelli, S.~Spagnolo.
\newblock Nonuniqueness in hyperbolic {C}auchy problems.
\newblock {\em Ann. of Math. (2)}, 126(3):495--524, 1987. 
 
\bibitem{CK2002} F. Colombini, T. Kinoshita.
{On the Gevrey well-posedness of the Cauchy problem for weakly hyperbolic equations of higher order}, {\em J. Differential Equations}, 186:394--419, 2002.
 
 \bibitem{CL}
F. Colombini, N. Lerner.
Hyperbolic operators with non-Lipschitz coefficients. 
{\em Duke Math. J.},  77:657--698, 1995.
 
 \bibitem{CM}
 F. Colombini, G. M\'etivier.
 The Cauchy problem for wave equations with non Lipschitz coefficients; application to continuation of solutions of some nonlinear wave equations. 
 {\em Ann. Sci. \'Ec. Norm. Sup\'er.}, (4) 41:177--220, 2008.
 
 \bibitem{CM-systems}
 F. Colombini, G. M\'etivier.
Counterexamples to the well posedness of the Cauchy problem for hyperbolic systems.
{\em Anal. PDE}, 8:499--511, 2015. 
 
 \bibitem{CS1982} F. Colombini, S. Spagnolo.
 {An example of a weakly hyperbolic Cauchy problem not well posed in $\mathcal{C}^\infty$}, 
 {\em Acta Math.}, 148:243--253, 1982.


\bibitem{CG90} L. J. Corwin, F. P. Greenleaf.
\emph{Representations of nilpotent Lie groups and their applications}, 
Cambridge Studies in Advanced Mathematics, Cambridge University Press, Cambridge, ${18}$ (1990). Basic theory and examples.

\bibitem{DS1998} P. D'Ancona, S. Spagnolo.
{Quasi-symmetrization of hyperbolic systems and propagation of the analytic regularity}, 
{\em Bollettino della Unione Matematica Italiana}, 1 B:169--186, 1998.


\bibitem{DR}
D. Dasgupta, M. Ruzhansky.
{Gevrey functions and ultradistributions on compact Lie groups and homogeneous spaces},
{\em Bull. Sci. Math.}, 756--782, 2014.

\bibitem{DR16}
D. Dasgupta, M. Ruzhansky.
{Eigenfunction expansions of ultradifferentiable functions and ultradistributions}, 
{\em Trans. Amer. Math. Soc.}, 368:8481--8498, 2016.

\bibitem{FR:Sobolev}
V.~Fischer, M.~Ruzhansky.
\newblock {S}obolev spaces on graded groups.
\newblock {\em to appear in Ann. Inst. Fourier,
  https://arxiv.org/abs/1311.0192}, 2013.

\bibitem{FR2016} V. Fischer, M. Ruzhansky.
\emph{Quantization on nilpotent Lie groups}, Progress in Mathematics, vol. 314, Birkh\"auser, 
2016. (open access book)

\bibitem{FRT20..} V. Fischer, M. Ruzhansky, C. Taranto. 
\emph{On the sub-Laplacian Gevrey spaces},
preprint.

\bibitem{F75}
G.~B. Folland.
\newblock Subelliptic estimates and function spaces on nilpotent {L}ie groups.
\newblock {\em Ark. Mat.}, 13(2):161--207, 1975.

\bibitem{Fol} G. B. Folland.
\emph{Harmonic Analysis in Phase Space},
Princeton, Princeton University Press, 2nd ed.,
1989. 

\bibitem{FS-CPAM} G.~B.~Folland, E.~M.~Stein.
\newblock Estimates for the $\overline{\partial_{b}}$
complex and analysis on the Heisenberg group.
\newblock {\em Comm. Pure Appl. Math.}, 27:429--522, 1974.

\bibitem{FS}
G.~B. Folland, E.~M. Stein.
\newblock {\em Hardy spaces on homogeneous groups}, volume~28 of {\em
  Mathematical Notes}.
\newblock Princeton University Press, Princeton, N.J.; University of Tokyo
  Press, Tokyo, 1982.
  
\bibitem{GR2012}
C. Garetto, M. Ruzhansky.
{On the well-posedness of weakly hyperbolic equations with time-dependent coefficients},
{\em J. Differential Equations}, 253:1317--1340, 2012.

\bibitem{GR2013}
C. Garetto, M. Ruzhansky.
{Weakly hyperbolic equations with non-analytic coefficients and lower order terms}, 
{\em Math. Ann.}, 357:401--440, 2013.

\bibitem{GR}
C. Garetto, M. Ruzhansky.
{Wave equation for sum of squares on compact Lie groups},
{\em J. Differential Equations}, 258:4324--4347, 2015.

\bibitem{Garetto-Ruzhansky:ARMA}
C.~Garetto, M.~Ruzhansky.
\newblock Hyperbolic second order equations with non-regular time dependent
  coefficients.
\newblock {\em Arch. Rational Mech. Anal.}, 217:113--154, 2015.
  
 \bibitem{HN-79}
B.~Helffer, J.~Nourrigat.
\newblock Caracterisation des op\'erateurs hypoelliptiques homog\`enes
  invariants \`a gauche sur un groupe de {L}ie nilpotent gradu\'e.
\newblock {\em Comm. Partial Differential Equations}, 4(8):899--958, 1979. 
  
  \bibitem{Helgason:wave-eqns-hom-spaces-1984}
S.~Helgason.
\newblock Wave equations on homogeneous spaces.
\newblock In {\em Lie group representations, {III} ({C}ollege {P}ark, {M}d.,
  1982/1983)}, volume 1077 of {\em Lecture Notes in Math.}, pages 254--287.
  Springer, Berlin, 1984.

 
\bibitem{H1967} L. H\"ormander.
{Hypoelliptic second order differential equations},
{\em Acta Math.}, 119:147--171, 1967.

\bibitem{HJL1985} A. Hulanicki, J. W. Jenkins, J. Ludwig.
{Minimum eigenvalues for positive, Rockland operators},
{\em Proc. Amer. Math. Soc.}, 94:718--720, 1985.


\bibitem{KS2006} T. Kinoshita, S. Spagnolo.
{Hyperbolic equations with non-analytic coefficients},
{\em Math. Ann.}, 336:551--569, 2006.

\bibitem{Melrose:wave-subelliptic-1986}
R.~Melrose.
\newblock Propagation for the wave group of a positive subelliptic second-order
  differential operator.
\newblock In {\em Hyperbolic equations and related topics ({K}atata/{K}yoto,
  1984)}, pages 181--192. Academic Press, Boston, MA, 1986.
  
 \bibitem{Miller:80}
K.~G. Miller.
\newblock Parametrices for hypoelliptic operators on step two nilpotent {L}ie
  groups.
\newblock {\em Comm. Partial Differential Equations}, 5(11):1153--1184, 1980.

\bibitem{Muller-Stein:Lp-wave-Heis}
D.~M{{\"u}}ller, E.~M. Stein.
\newblock {$L^p$}-estimates for the wave equation on the {H}eisenberg group.
\newblock {\em Rev. Mat. Iberoamericana}, 15(2):297--334, 1999.

\bibitem{Nachman:wave-Heisenberg-CPDE-1982}
A.~I. Nachman.
\newblock The wave equation on the {H}eisenberg group.
\newblock {\em Comm. Partial Differential Equations}, 7(6):675--714, 1982.

\bibitem{Nishitani:BSM-1983}
T.~Nishitani.
\newblock Sur les {\'e}quations hyperboliques {\`a} coefficients
  h{\"o}ld{\'e}riens en {$t$} et de classe de {G}evrey en {$x$}.
\newblock {\em Bull. Sci. Math. (2)}, 107(2):113--138, 1983.

\bibitem{RS1980} M. Reed, B. Simon.
\emph{Methods of Modern Mathematical Physics, Vol. $1$: Functional Analysis, revised and enlarged edition},
Academic Press, 1980.

\bibitem{Rockland}
C.~Rockland.
\newblock Hypoellipticity on the {H}eisenberg group-representation-theoretic
  criteria.
\newblock {\em Trans. Amer. Math. Soc.}, 240:1--52, 1978.

\bibitem{Rothschild-Stein:AM-1976}
L.~P. Rothschild, E.~M. Stein.
\newblock Hypoelliptic differential operators and nilpotent groups.
\newblock {\em Acta Math.}, 137(3-4):247--320, 1976.

\bibitem{RT-ARMA}  M. Ruzhansky, N. Tokmagambetov.
Wave equation for operators with discrete spectrum and irregular propagation speed.
{\em to appear in Arch. Rational Mech. Anal.} 
https://arxiv.org/abs/1705.01418

\bibitem{RT}  M. Ruzhansky, V. Turunen.
 \emph{Pseudo-Differential Operators and Symmetries: Background Analysis and Advanced Topics},
 Basel, Birkh\"auser,
 2009.


\bibitem{T1986} M. E. Taylor.
\emph{Noncommutative harmonic analysis},
volume $22$ of Mathematical Surveys and Monographs, American Mathematical Society, 1986.

\bibitem{tER:97}
A.~F.~M. ter Elst, D.~W. Robinson.
\newblock Spectral estimates for positive {R}ockland operators.
\newblock In {\em Algebraic groups and {L}ie groups}, volume~9 of {\em Austral.
  Math. Soc. Lect. Ser.}, pages 195--213. Cambridge Univ. Press, Cambridge,
  1997.

\bibitem{thangavelu}
S. Thangavelu.
\emph{Harmonic analysis on the Heisenberg group},
Boston, Birkh\"auser,
1998.


\end{thebibliography}
\end{document}